\newtheorem{proposition}{Proposition}
\newtheorem{theorem}{Theorem}
\newtheorem{corollary}{Corollary}
\DeclareMathAlphabet{\pazocal}{OMS}{zplm}{m}{n}
\def\calO{\pazocal{O}}
\def\calS{\pazocal{S}}
\def\calT{\pazocal{T}}
\def\calX{\pazocal{X}}
\DeclareMathAlphabet{\mathbbold}{U}{bbold}{m}{n}
\DeclareMathOperator{\Com}{\mathsf{Com}}
\DeclareMathOperator{\CDi}{\mathsf{CD}}
\DeclareMathOperator{\Nov}{\mathsf{Nov}}
\def\k{\mathbbold{k}}
\title{Polynomial identities in Novikov algebras}
\begin{document}

\author{Vladimir Dotsenko}

\address{ 
Institut de Recherche Math\'ematique Avanc\'ee, UMR 7501, Universit\'e de Strasbourg et CNRS, 7 rue Ren\'e-Descartes, 67000 Strasbourg CEDEX, France}

\email{vdotsenko@unistra.fr}

\author{Nurlan Ismailov}

\address{Astana IT University, Astana, 010000, Kazakhstan}

\email{nurlan.ismail@gmail.com}

\author{Ualbai Umirbaev}

\address{Department of Mathematics, Wayne State University, Detroit, MI 48202, USA; Department of Mathematics, Al-Farabi Kazakh National University, Almaty, 050040, Kazakhstan; Institute of Mathematics and Mathematical Modeling, Almaty, 050010, Kazakhstan,}

\email{umirbaev@wayne.edu}

\dedicatory{To Ivan Pavlovich Shestakov on the occasion of his 75th birthday}
 
\date{}

\begin{abstract}
In this paper, we study Novikov algebras satisfying nontrivial identities. We show that a Novikov algebra over a field of zero characteristic that satisfies a nontrivial identity satisfies some unexpected ``universal'' identities, in particular, right associator nilpotence, and right nilpotence of the commutator ideal. This, in particular, implies that 
a Novikov algebra over a field of zero characteristic satisfies a nontrivial identity if and only if it is Lie-solvable. We also establish that any system of identities of Novikov algebras over a field of zero characteristic follows from finitely many of them, and that the same holds over any field for multilinear Novikov identities. Some analogous simpler statements are also proved for commutative differential algebras.
\end{abstract}

\subjclass[2020]{Primary 13N15; Secondary 16R50, 17D99, 18M70.}

\keywords{Novikov algebra, polynomial identity, differential identity, Specht property}

\maketitle

\section{Introduction}

A vector space $N$ over a field $\k$ equipped with a product $x,y\mapsto x\circ y$ is called a {\em Novikov algebra} if the following  identities hold for all $x,y,z\in N$:
\begin{gather*}
(x\circ y)\circ z-x\circ(y\circ z)=(y\circ x)\circ z-y\circ (x\circ z),\\
(x\circ y)\circ z=(x\circ z)\circ y.
\end{gather*}

The term ``Novikov algebra'' was coined by Osborn \cite{MR1163779}. In fact, the identities of Novikov algebras seem to have first appeared in the study of Hamiltonian operators in the formal calculus of variations by Gelfand and Dorfman \cite{GD79}, and then rediscovered by Balinskii and Novikov in the context of classification of linear Poisson brackets of hydrodynamical type~\cite{BN85}. 

In this paper, we advance in PI-theory for Novikov algebras, that is, study two aspects of Novikov algebras satisfying additional identities. We prove results on solvability and nilpotence of such algebras, and we establish existence of a finite basis of identities of any given algebra. Throughout the paper, we present our results using the language of operads, ideals in operads, and modules over operads. We believe that exploring the advantages of this language when dealing with polynomial identities is a very timely task, and we hope that our work will help ring theorists to get accustomed to this language.

\subsection{Nilpotence and solvability}
The questions of nilpotence and solvability of Novikov algebras have been studied quite extensively, starting from the result of Zelmanov \cite{Zel} from 1987 who proved that if $N$ is a finite dimensional right nilpotent Novikov algebra then $N^2$ is nilpotent. In the following years, Filippov \cite{Fil01} proved that any left-nil Novikov algebra of bounded index over a field of characteristic zero is nilpotent, and Dzhumadildaev and Tulenbaev \cite{DzT} proved that any right-nil Novikov algebra of bounded index $n$ is right nilpotent if the characteristic of the ground field $\k$ is $0$ or $p>n$. More recently , Shestakov and Zhang proved \cite{ShZh} that a Novikov algebra $N$ over any field is solvable if and only if it is right nilpotent, and that both conditions are also equivalent to the nilpotency of $N^2$,
Zhelyabin and the third author of the present paper proved in \cite{UZh21} solvability of a Novikov algebra graded by a finite abelian group is solvable if the homogeneous component of the unit is solvable if the characteristic of the ground field does not divide the order of the group, and Tulenbaev, Zhelyabin and the third author of the present paper proved that \cite{TUZ21} a Novikov algebra $N$ over a field of characteristic different from~$2$ is solvable as a Lie algebra if and only if the commutator ideal $[N,N]$ is right nilpotent.  

Our work started with the discovery of the following surprising property of Novikov algebras over a field of zero characteristic: if a Novikov algebra $N$ satisfies a nontrivial identity, then this algebra is right associator nilpotent, that is $(N,N,N)^R_p=0$ for some $p$, where $(N,N,N)^R_1=(N,N,N)$ is the linear span of all associators $(x,y,z)=(x\circ y)\circ z-x\circ(y\circ z)$, and 
$(N,N,N)^R_{p+1}=((N,N,N)^R_p,N,N)$. This result may be viewed as a counterpart for Novikov algebras of the associator nilpotency theorem for alternative algebras proved by Shestakov \cite{MR674178}. Using the right associator nilpotency, we prove that a Novikov algebra over a field of zero characteristic satisfies a nontrivial identity if and only if it is Lie-solvable: the Lie bracket on it given by the formula $[a,b]=a\circ b-b\circ a$ defines a solvable Lie algebra. 

\subsection{Finite basis of identities}
Given a variety (or, to use less ambiguous terminology, an equational class) of algebras, one may ask if any subvariety of it can be defined by finitely many identities. This is usually referred to as the Specht property of a variety, as a tribute to foundational work of Specht on varieties of associative algebras \cite{MR35274}. Over a field of zero characteristic, it was established by Kemer \cite{MR937115,MR1108620} that the variety of associative algebras has the Specht property; over a field of characteristic $p>0$, there exist subvarieties of the variety of associative algebras with an infinite basis of identities, see \cite{MR1773251,MR1799541,MR1799533}. The conjecture of Kemer \cite{MR1244616} that a subvariety of the variety of associative algebras over a field of characteristic $p>0$ defined by multilinear identities can be defined by finitely many of them still remains an open problem. 

Some generalizations of Kemer's approach to associative algebras were studied by Belov \cite{MR2643374} in the nonassociative case; whenever his results are applicable to some variety, relatively free algebras of that variety have rational Hilbert series, which rules out the variety of Novikov algebras, where the Hilbert series of the free one-generated Novikov algebra, computed by Dzhumadildaev and the second author of the present paper \cite{MR3241181}, is the generating series for the partition function, which is not rational. Another hint as to why the methods of Kemer are unlikely to work in the case of Novikov algebras comes from remarking that his approach uses in a meaningful way the structure theory for finite-dimensional associative algebras, and in the case of Novikov algebras, a theorem of Zelmanov \cite{Zel} asserts that a simple Novikov algebra over a field of zero characteristic is a field, which leaves no hope for using simple Novikov algebras in order to understand identities.

The identities of Novikov algebras also resemble the identities of the so called bicommutative algebras, for which 
\begin{gather*}
x\circ(y\circ z)=y\circ (x\circ z),\\
(x\circ y)\circ z=(x\circ z)\circ y.
\end{gather*}
The Specht property of this latter variety was established by Drensky and Zhakhayev~\cite{MR3758517} over any ground field using Higman's Lemma \cite{MR49867}, which is perhaps the oldest known approach to the Specht property, see, for instance, its application to some varieties of Lie algebras by Bryant and Vaughan-Lee \cite{MR297825} and by Sheina  \cite{MR0409584}, and to some varieties of alternative algebras by the third author of the present paper \cite{MR816580}. The variety of Novikov algebras appears to be somewhat richer than that of bicommutative algebras: the combinatorics of the basis elements is more intricate in the Novikov case, and the class of polynomial representations of the general linear groups that appear in free Novikov algebras is also much larger than that of free bicommutative algebras \cite{MR3241181,MR2840273}. Nevertheless, we found an indirect way to apply Higman's Lemma and managed to prove the Specht property over a field of zero characteristic; in fact, we prove the stronger finite basis property for multilinear identities over a field of any characteristic. 

\subsection{Novikov algebras and differential algebras}
Our work relies in a very meaningful way on the connection that exists between Novikov algebras and commutative associative differential algebras. In fact, already in \cite{GD79} a general construction of Novikov algebras is given (note that I.~M.~Gelfand and I.~Ja.~Dorfman in \cite{GD79} attribute this construction to S.~I.~Gelfand). Namely, let $A$ be a commutative associative algebra, and let $D$ be a derivation of~$A$. One may define a new  multiplication $\circ$ on $A$ by setting 
\[
x\circ y=x D(y). 
\] 
It turns out that the property of $D$ being a derivation guarantees that both identities of Novikov algebras hold for this product. Dzhumadildaev and L\"ofwall proved in \cite{DzhL} that this construction can be used to embed each free Novikov algebra into the free commutative associative differential algebra on the same generating set. Moreover, one may regard this construction as a functor from the category of commutative associative differential algebras to the category of Novikov algebras,
a multilplication changing functor in the sense of Mikhalev and Shestakov \cite{MR3169596}, and consider its left adjoint, the \emph{universal differential enveloping algebra} of a Novikov algebra. It was proved by Bokut, Chen, and Zhang \cite{BCZ18} that every Novikov algebra embeds into its universal differential enveloping algebra, so one can faithfully represent Novikov algebras using the construction of S.~I.~Gelfand. 

\subsection{Structure of the paper}
The paper is organized as follows. In Section \ref{sec:recoll}, we recall the necessary definitions and results.  
In Section \ref{sec:toy}, we discuss consequences of differential identities of commutative associative algebras, which allows us to prepare the reader for the proofs of our main theorems by considering an interesting toy model. We establish two new results in that case. First, if a commutative associative differential algebra  satisfies a nontrivial identity, then it satisfies an identity of the form
 \[
D(x_1)\cdots D(x_n)=0
 \]
for some $n$. Second, the variety of commutative associative differential algebras over a field of zero characteristic has the Specht property, moreover, every system of multilinear differential identities over any field is finitely based.  
In Section \ref{sec:IdNov}, we prove the main results of the paper. We start from an auxiliary result about consequences of differential identities where only Novikov operations may be used to derive consequences, which we then use to prove that over a field of zero characteristic a Novikov algebra satisfying a nontrivial identity is right associator nilpotent, that its commutator ideal is right nilpotent, and that a Novikov algebra satisfies a nontrivial identity if and only if it is Lie-solvable. We then prove that the variety of Novikov algebras over a field of zero characteristic has the Specht property moreover, that every system of multilinear Novikov identities over any field is finitely based.

We would like to dedicate this article to Ivan Pavlovich Shestakov on the occasion of his 75th birthday. His tremendous contributions to ring theory in the past fifty years have strongly influenced the state-of-the-art in the field as well as our personal tastes, and this article in particular is inspired by some of his work. 

\section{Recollections}\label{sec:recoll}

Throughout this paper, all algebras are defined over an arbitrary field $\k$; for some results it will be assumed of zero characteristic, in which case that will be explicitly stated.

\subsection{Operads} As indicated in the introduction, we use the language of the operad theory to state and prove our main results. In this section, we give a brief recollection of necessary definitions related to operads. Ring theorists wishing to get a short introduction to how operads encode varieties of algebras are invited to consult \cite{DU22,MR2533586}; readers interested in a more systematic approach are referred to the monographs \cite{BDo,MR2954392}. 

It is well known that over a field $\k$ of characteristic zero every identity in $\k$-algebras of any type is equivalent to a multilinear one. The notion of operad makes the most of that fact by studying multilinear identities in a highly structured way. Specifically, to a variety of $\k$-algebras $\mathfrak{M}$, one associates the datum 
 \[
\calO=\calO_\mathfrak{M}:=\{\calO(n)\}_{n\ge 1},
 \]
where $\calO(n)$ is the $S_n$-module of multihomogeneous elements of degree $1$ in each generator in the free algebra $F_\mathfrak{M}\langle x_1,\ldots,x_n\rangle$. 

The construction of a free $\mathfrak{M}$-algebra on a given vector space may be viewed as a functor from vector spaces to $\mathfrak{M}$-algebras, but also, in a more basic way, as a functor from the category of vector spaces to itself. In fact, this latter functor is of a very particular type: it has a structure of a \emph{monad}. Namely, if we apply that functor twice, considering $F_\mathfrak{M}\langle F_\mathfrak{M}\langle V\rangle\rangle$, we obtain the vector space of all possible substitution schemes of $\mathfrak{M}$-polynomials into each other. There is a canonical linear map
 \[
\tau_V\colon F_\mathfrak{M}\langle F_\mathfrak{M}\langle V\rangle\rangle\to F_\mathfrak{M}\langle V\rangle,
 \]
which says that for a substitution scheme, we can actually perform a substitution, and write an $\mathfrak{M}$-polynomial of $\mathfrak{M}$-polynomials as an $\mathfrak{M}$-polynomial. It is this map $\tau$ that gives our functor a monad structure, meaning that it is associative: if we apply our functor three times, forming the gigantic algebra
 \[
F_\mathfrak{M}\langle F_\mathfrak{M}\langle F_\mathfrak{M}\langle V\rangle\rangle\rangle,
 \]
there are two different maps to $F_\mathfrak{M}\langle V\rangle$, depending on the order of substitutions, and those give the same result. (Strictly speaking, to talk about a monad, one should also discuss the unitality, but the compatibility of the maps $\tau_V$ with the obvious embedding maps $\imath_V\colon V\to F_\mathfrak{M}\langle V\rangle$ is too trivial to spend time on it.) This can be restricted to multilinear elements, and it defines what is called an operad structure on the sequence $\{\calO(n)\}_{n\ge 1}$.

One key difference between the language of varieties and the language of operads may have already become apparent. Namely, in terms of varieties of algebras, properties like commutativity and associativity are on the same ground: both express certain identities in algebras. In terms of operads, commutativity of an operation is a symmetry type, a consequence of the fact that an operad is in particular a sequence of $S_n$-modules, while associativity of an operation is an identity: a relation between results of substitution of operations into one another. This distinction will be very important for us. Another remark is that, while to an expert of universal algebra, the notion of an operad seems like a minor variation of what has been long known under the name of \emph{clone} (moreover, the first ever paper about operads, that of Artamonov \cite{MR0237408}, introduced them under the name ``clone of multilinear operations'', as opposed to the catchy term ``operad'' coined by May \cite{MR0420610,MR2177746}), working in the $\k$-linear situation moves away from the cartesian context of clones, and allowing one to focus on multilinear operations, making many questions (and answers) much more combinatorially transparent.

A clean interpretation of how an operad structure formalizes the notion of substitutions of multilinear maps uses the notion of a linear species~\cite{MR2724388}, generalizing the combinatorial species of Joyal \cite{MR1629341,MR633783}. A \emph{linear species} is a contravariant functor from the groupoid of finite sets (the category whose objects are finite sets and whose morphisms are bijections) to the category of vector spaces. This definition is not easy to digest at a first glance, and a reader with intuition coming from varieties of algebras is invited to think of the value $\calS(I)$ of a linear species $\calS$ on a finite set $I$ as of the set of multilinear operations of type $\calS$ (accepting arguments from some vector space $V_1$ and assuming values in some vector space $V_2$) whose inputs are indexed by $I$. Sometimes, a ``skeletal definition'' is preferable: the category of linear species is equivalent to the category of symmetric sequences $\{\calS(n)\}_{n\ge 0}$, where each $\calS(n)$ is a right $S_n$-module, a morphism between the sequences $\calS_1$ and $\calS_2$ in this category is a sequence of $S_n$-equivariant maps $f_n\colon \calS_1(n)\to\calS_2(n)$. While this definition may seem more appealing, the functorial definition simplifies the definitions of operations on linear species: it is harder to comprehend the two following definitions skeletally.

The \emph{composition product} of linear species is defined by the formula 
 \[
(\calS_1\circ\calS_2)(I)
=\bigoplus_{n\ge 0}\calS_1(\{1,\ldots,n\})\otimes_{\k S_n}\left(\bigoplus_{I=I_1\sqcup \cdots\sqcup I_n}\calS_2(I_1)\otimes\cdots\otimes \calS_2(I_n)\right).
 \]
The linear species $\mathbbold{1}$ which vanishes on a finite set $I$ unless $|I|=1$, and whose value on $I=\{a\}$ is given by $\k a$ is the unit for the composition product: we have $\mathbbold{1}\circ\calS=\calS\circ\mathbbold{1}=\calS$.

Formally, a \emph{symmetric operad} is a monoid with respect to the composition product. It is just the multilinear version of substitution schemes of free algebras discussed above, but re-packaged in a certain way. The advantage is that existing intuition of monoids and modules over them, available in any monoidal category \cite{MR0354798}, can be used for studying varieties of algebras. In particular, one can talk about two-sided ideals of operads. A two-sided ideal of an operad is what one may use to form a quotient operad; in the language of varieties of algebras the equivalent notion is that of a T-ideal, that is an ideal of the free algebra that is additionally closed under all endomorphisms. If we restrict ourselves to multilinear elements, being closed under endomorphisms is easy to translate into the language of operads: this means that we place ourselves in the category of right modules over an operad. By contrast, being an ideal does not correspond to considering left modules: the composition product $\circ$ is highly nonlinear, and so a left module structure corresponds to substituting an element of the left module into each slot of an operation from an operad. For an ideal in an operad, we may substitute an element of an ideal just in one slot of an operation of an operad, and this corresponds to the notion of an \emph{infinitesimal left module}. Altogether, an ideal of an operad $\calO$ has two commuting structures: that of a right $\calO$-module and that of an infinitesimal left $\calO$-module, or, using the existing terminology, of an \emph{infinitesimal $\calO$-bimodule}. This is a notion that will be useful for us below.

The free symmetric operad generated by a linear species $\calX$ is defined as follows. Its underlying linear species is the species $\calT(\calX)$ for which $\calT(\calX)(I)$ is spanned by decorated rooted trees (including the rooted tree without internal vertices and with just one leaf, which corresponds to the unit of the operad): the leaves of a tree must be in bijection with $I$, and each internal vertex $v$ of a tree must be decorated by an element of $\calX(I_v)$, where $I_v$ is the set of incoming edges of $v$. Such decorated trees should be thought of as tensors: they are linear in each vertex decoration. The operad structure is given by grafting of trees onto each other. (If one prefers the skeletal definition, one can talk about the free operad generated by a collection of $S_n$-modules, but the formulas will become heavier.) Free operads can be used to present operads by generators and relations; from the point of view of varieties, a presentation of an operad by generators and relations is extremely natural: the generators correspond to the signature (all structure operations of a variety, with their symmetries already taken into account), and the relations correspond to identities satisfied by the structure operations. 

\subsection{Operads of commutative associative differential algebras and  of Novikov algebras}

Commutative associative differential algebras form a variety and so may be encoded by an operad (since we work over a field of zero characteristic), which we shall denote $\CDi$. Specifically, the operad $\CDi$ is generated by a binary symmetric operation $\mu$ and a unary operation~$\delta$, and its ideal of relations is generated by the elements 
 \[
\mu(\mu(a_1,a_2),a_3)-\mu(a_1,\mu(a_2,a_3)),\quad  \delta(\mu(a_1,a_2))-\mu(\delta(a_1),a_2)-\mu(a_1,\delta(a_2)). 
 \]
We shall write $ab$ instead of $\mu(a,b)$, $a'$ instead of $\delta(a)$, and $a^{(s)}$ instead of $\delta^s(a)$ with $s\ge 2$, and also avoid unnecessary brackets where one may use the associativity of $\mu$; thus, for instance, we write $(a_1a_3'a_4)^{(2)}a_2$ instead of the rather bulky
 \[
\mu(\delta^2(\mu(\mu(a_1,\delta(a_3)),a_4)),a_2).
 \]
The relation
 \[
\delta(\mu(a_1,a_2))=\mu(\delta(a_1),a_2)+\mu(a_1,\delta(a_2)) 
 \]
is immediately seen to be a distributive law \cite[Sec.~8.6]{MR2954392} between the algebra $\k[\delta]$ (viewed as an operad supported in arity $1$) and the operad $\Com$ of commutative associative algebras. Consequently, on the level of underlying linear species, we have
$\CDi\cong\Com\circ \k[\delta]$, 
which means that as a basis of $\CDi(n)$ one may take all \emph{differential monomials} 
 \[
a_1^{(i_1)}a_2^{(i_2)}\cdots a_n^{(i_n)}. 
 \]
This, leads to a description of free differential polynomial algebras that matches the classical description of that algebra, see, e.g. \cite{Kolchin,Ritt}.

The operad $\Nov$ encoding Novikov algebras is generated by one binary operation $\nu$, and its ideal of relations is generated by the elements
\begin{gather*}
\nu(\nu(a_1,a_2),a_3)-\nu(a_1,\nu(a_2,a_3)-
\nu(\nu(a_2,a_1),a_3)+\nu(a_2,\nu(a_1,a_3),\\
\nu(\nu(a_1,a_2),a_3)-\nu(\nu(a_1,a_3),a_2).
\end{gather*}
We shall write $a\circ b$ instead of $\nu(a,b)$. 

In the introduction, we discussed the functor
 \[
\CDi\text{-alg} \to \Nov\text{-alg}
 \]
assigning to a commutative associative differential algebra structure on a vector space the Novikov algebra structure on the same vector space given by the product $x\circ y=xD(y)$. Operadically, this functor corresponds to the morphism of operads 
 \[
\imath\colon \Nov\to \CDi 
 \]
defined on the generator $\nu$ of $\Nov$ by $\imath(\nu)=\mu\circ_2\delta$. 

\begin{proposition}\label{prop:DLbasis}
The morphism $\imath$ is injective. Its image coincides with the linear span of all differential monomials $a_1^{(i_1)}a_2^{(i_2)}\cdots a_n^{(i_n)}$ for which $i_1+\cdots+i_n=n-1$.
\end{proposition}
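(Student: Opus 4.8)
The plan is to equip $\CDi$ with the \emph{weight grading} in which the generator $\mu$ has weight $0$ and $\delta$ has weight $1$. This grading is compatible with both defining relations (associativity is weight $0=0$, and the Leibniz relation is weight $1=1$), so it descends to a decomposition $\CDi(n)=\bigoplus_{w\ge 0}\CDi(n)_w$, where $\CDi(n)_w$ is spanned by the differential monomials $a_1^{(i_1)}\cdots a_n^{(i_n)}$ with $i_1+\cdots+i_n=w$. Under operadic composition weights add, and $\imath(\nu)=\mu\circ_2\delta$ has weight $1$. Since $\Nov$ is generated by the binary operation $\nu$, every element of $\Nov(n)$ is a combination of composites of $n-1$ copies of $\nu$ arranged in a binary tree with $n$ leaves; applying $\imath$ therefore lands in weight $(n-1)\cdot 1=n-1$. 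This already gives one inclusion, $\imath(\Nov(n))\subseteq\CDi(n)_{n-1}$, which is exactly the span of the differential monomials singled out in the statement.

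First I would dispose of injectivity by invoking the embedding theorem of Dzhumadildaev and L\"ofwall recalled in the introduction: the free Novikov algebra embeds into the free commutative associative differential algebra on the same set. Restricting to the multihomogeneous component of degree one in each of $n$ chosen generators turns this into the injectivity of $\imath$ on $\Nov(n)$. (Alternatively, injectivity follows a posteriori from the surjectivity below, together with any spanning set of $\Nov(n)$ of cardinality $\binom{2n-2}{n-1}=\dim\CDi(n)_{n-1}$.) It then remains to prove the reverse inclusion $\CDi(n)_{n-1}\subseteq\imath(\Nov(n))$, which is the heart of the matter; write $W_n:=\imath(\Nov(n))$.

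I would prove $W_n=\CDi(n)_{n-1}$ by induction on $n$, using that $\imath$ is a morphism of operads, so that for $c_U\in\Nov(U)$ and $c_V\in\Nov(V)$ with $U\sqcup V=\{1,\dots,n\}$ one has $\imath(\nu(c_U,c_V))=\imath(c_U)\cdot\delta(\imath(c_V))$; thus whenever $u\in W_{|U|}$ and $v\in W_{|V|}$, the element $u\,v'$ lies in $W_n$. Fix a target monomial $m=a_1^{(i_1)}\cdots a_n^{(i_n)}$ of weight $n-1$. If some variable carries exactly one derivative, say $i_\ell=1$, put $V=\{a_\ell\}$ and $U$ its complement: then $u=\prod_{k\ne\ell}a_k^{(i_k)}$ has weight $n-2=|U|-1$, so $u\in W_{n-1}$ by induction, and $u\,v'=u\,a_\ell'=m$ with no further terms. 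If instead every nonzero $i_k$ is at least $2$, the constraint $\sum_k i_k=n-1<n$ forces many variables to carry no derivative; I would pick one variable $a_\ell$ with $i_\ell=d\ge 2$ together with exactly $d-1$ derivative-free variables, forming a set $Z$, and set $V=\{a_\ell\}\sqcup Z$, with $u$ collecting all remaining variables. A short count shows that this many derivative-free variables is available while leaving $U$ nonempty, and that $u\in W_{n-d}$ and $v:=a_\ell^{(d-1)}\prod_{Z}a_k^{(0)}\in W_{d}$, both accessible by induction since $2\le d\le n-1$. Expanding $u\,v'$ by the Leibniz rule yields $m$ plus correction terms in which one of the variables of $Z$ now carries exactly one derivative; each correction therefore falls into the first case and lies in $W_n$, whence $m\in W_n$.

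The crux, and the step where a naive approach breaks down, is the choice of the decomposition $V=\{a_\ell\}\sqcup Z$ in the second case. The obvious move of peeling off a single derivative-free variable (taking $|U|=1$) spreads the extra derivative, via the Leibniz rule, over \emph{all} remaining variables, producing correction monomials that are no simpler than $m$ --- for instance it couples the derivative patterns $(3,1,0,\dots)$ and $(2,2,0,\dots)$ to one another, so no well-founded induction results. Keeping $V$ as small as possible, padded with just enough derivative-free variables so that $v$ has the correct weight, confines the Leibniz corrections to those few padding variables and hence to the already-settled first case; it is precisely the equality $\sum_k i_k=n-1$ that supplies the needed abundance of derivative-free variables. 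Once $W_n=\CDi(n)_{n-1}$ is established, comparing dimensions recovers $\dim\Nov(n)=\binom{2n-2}{n-1}$ and, together with injectivity, shows that $\imath$ restricts to an isomorphism $\Nov(n)\xrightarrow{\sim}\CDi(n)_{n-1}$ in every arity.
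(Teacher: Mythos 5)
Your argument is correct, but note that it does considerably more work than the paper, whose entire proof is a one-line reduction: since $\Nov(n)$ and $\CDi(n)$ are the multilinear components of the corresponding free algebras on $n$ generators, both injectivity \emph{and} the description of the image follow at once from the embedding theorem of Dzhumadildaev and L\"ofwall \cite[Th.~7.8]{DzhL}. You invoke that same theorem, but only for injectivity; the image characterization you prove from scratch, and this is where your proof genuinely diverges. Your two-case induction checks out: the weight grading (with $\mu$ of weight $0$ and $\delta$ of weight $1$) is compatible with both relations and gives $\imath(\Nov(n))\subseteq\CDi(n)_{n-1}$; in Case 1 the product $u\,a_\ell'$ reproduces the target monomial on the nose; in Case 2 the count works (if $i_\ell=d\ge 2$ and $s$ variables carry positive degree, then $n-s\ge\tfrac{n-1+d}{2}\ge d-1$ derivative-free variables are available and $|U|=n-d\ge 1$), and each Leibniz correction term has a variable of degree exactly $1$, hence falls under Case 1 at arity $n$, which itself only uses the inductive hypothesis at arity $n-1$ --- so the recursion is well-founded. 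What your route buys is an explicit, self-contained procedure expressing every differential monomial of weight $n-1$ as the image of a Novikov element, so that only half of the cited theorem is actually needed; what it does not buy is independence from \cite{DzhL}: your parenthetical alternative for injectivity (surjectivity plus a spanning set of $\Nov(n)$ of cardinality $\binom{2n-2}{n-1}$) is not carried out, and producing such a spanning set of the free Novikov algebra is essentially the combinatorial content of the result you would be trying to avoid. So both proofs ultimately lean on Dzhumadildaev--L\"ofwall; the paper leans on it entirely, yours only for the injectivity half.
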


\begin{proof}
Since the arity $n$ component of an operad may be identified with the space of multilinear elements of the $n$-generated free algebras, this result follows immediately of the corresponding statement about free algebras established by Dzhumadildaev and L\"ofwall \cite[Th.~7.8]{DzhL}.
\end{proof}

This result is of key importance to us: it means that we may perform various calculations inside the operad $\CDi$ whose basis and structure constants are much more intuitive than those of the operad $\Nov$. 

\section{Identities of differential algebras}\label{sec:toy}

To warm up for the proof of our main result, we shall first consider a toy model of commutative associative differential algebras. 

\subsection{Nilpotence of the image of the derivation}
Our first result asserts that for a commutative associative differential algebra satisfying nontrivial identities, the ideal generated by the image of $D$ is nilpotent. In operadic terms, this can be stated precisely as follows.  

\begin{theorem}\label{t2} Let $f$ be a nonzero element of the operad $\CDi$. If $\mathrm{char}(\k)=0$, the ideal of the operad $\CDi$ generated by $f$ contains an element of the form $a_1'\cdots a_m'$ for some $m\geq 1$.
\end{theorem}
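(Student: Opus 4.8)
The plan is to carry out all computations inside $\CDi$ using the differential monomial basis recalled above, and to encode each element of $\CDi(n)$ by its \emph{symbol}: the polynomial in $\k[\lambda_1,\dots,\lambda_n]$ obtained by the rule $a_1^{(i_1)}\cdots a_n^{(i_n)}\mapsto\lambda_1^{i_1}\cdots\lambda_n^{i_n}$ (equivalently, by performing the differential substitution $a_j\mapsto e^{\lambda_j t}$, which multiplies such a monomial by $\lambda_1^{i_1}\cdots\lambda_n^{i_n}$). Since the differential monomials form a basis, this is a linear isomorphism $\CDi(n)\xrightarrow{\ \sim\ }\k[\lambda_1,\dots,\lambda_n]$ in each arity, under which $a_1'\cdots a_m'$ corresponds to the squarefree monomial $\lambda_1\cdots\lambda_m$. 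As the ideal generated by $f$ is a graded subspace, it therefore suffices to show that the set $J$ of symbols of its elements contains some squarefree monomial.

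First I would translate the generation of the ideal into operations on symbols. A direct computation with the operadic composition shows that filling the $i$-th slot of an operation by $g$ has the effect, on symbols, of substituting $\lambda_i$ by the sum of the fresh variables of $g$ and multiplying by the symbol of $g$, while substituting an operation into a slot of $h$ multiplies its symbol by that of $h$ with one variable set to the sum of all the present ones. From this I would read off the closure properties of $J$ that I need: it is stable under permutations of variables and linear combinations; under multiplication by any $\lambda_i$ (fill slot $i$ by the derivation $\delta$), and hence by any polynomial in the present variables; and under the substitution $\lambda_i\mapsto\lambda_i+\mu$ for a fresh variable $\mu$ (fill slot $i$ by the product $z_1z_2$). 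In particular $J$ is closed under the finite-difference operator $\Delta_\mu^{(i)}\colon Q\mapsto Q|_{\lambda_i\mapsto\lambda_i+\mu}-Q$.

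The heart of the argument, and the step I expect to be the main obstacle, is to extract from the single nonzero symbol $P:=\operatorname{sym}(f)$ an honest squarefree monomial. The difficulty is that the operad has no arity-zero part, so one may never set a variable to $0$ (or to a scalar); all coefficient extraction must be performed by shifts alone. The device that gets around this is complete polarization by iterated \emph{top} finite differences, in which the base point $\lambda_i$ is always retained. Concretely, writing $d_i=\deg_{\lambda_i}P$ and applying the mixed difference $\Delta_{\mu_1}^{(i)}\cdots\Delta_{\mu_{d_i}}^{(i)}$ with fresh increments $\mu_1,\dots,\mu_{d_i}$, one computes that $P$ is sent to $d_i!\,\mu_1\cdots\mu_{d_i}\cdot C_i$, where $C_i\neq 0$ is the leading coefficient of $P$ as a polynomial in $\lambda_i$; this is exactly where the hypothesis $\operatorname{char}(\k)=0$ enters, guaranteeing $d_i!\neq 0$. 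Thus $\lambda_i$ gets replaced by $d_i$ fresh squarefree variables while $P$ is replaced by the nonzero polynomial $C_i$ in fewer variables. Iterating over all variables reduces $P$ to a nonzero scalar times a squarefree monomial, which hence lies in $J$.

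Finally, given a squarefree monomial in $J$, I would multiply it (using closure under multiplication by present variables) by each variable that occurs in it to the zeroth power, raising every variable to the first power. The resulting symbol is $\lambda_1\cdots\lambda_m$ for the appropriate $m$, that is, the symbol of $a_1'\cdots a_m'$, which is therefore contained in the ideal generated by $f$.
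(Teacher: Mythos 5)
Your proposal is correct. The symbol calculus you set up is legitimate: by Proposition~\ref{prop:DLbasis} (equivalently, the distributive law $\CDi\cong\Com\circ\k[\delta]$), the differential monomials form a basis of $\CDi(n)$, so $a_1^{(i_1)}\cdots a_n^{(i_n)}\mapsto\lambda_1^{i_1}\cdots\lambda_n^{i_n}$ is an arity-wise linear isomorphism; your closure properties are exactly the translations of the admissible ideal operations (composing $\delta$ into a slot is multiplication by $\lambda_i$, composing the binary product into a slot is the shift $\lambda_i\mapsto\lambda_i+\mu$, and composing the ideal element into a slot of the binary product is the padding by an exponent-zero variable, which you tacitly need so that $Q|_{\lambda_i\mapsto\lambda_i+\mu}$ and $Q$ live in the same arity before subtracting); and the polarization identity $\Delta^{(i)}_{\mu_1}\cdots\Delta^{(i)}_{\mu_{d_i}}P=d_i!\,\mu_1\cdots\mu_{d_i}\,C_i$ is correct, with $\mathrm{char}(\k)=0$ entering precisely through $d_i!\neq 0$.

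Your route is genuinely different in organization from the paper's, even though both ultimately exploit the same two elementary operations. The paper expands $f=\sum_s g_s a_k^{(s)}$ along a distinguished last argument, forms the antisymmetrized element $h_0=f(\ldots,a_k)a_{k+1}-f(\ldots,a_{k+1})a_k$ to kill the degree-zero term, and then iterates the operation $h\mapsto h(yt,z)-h(y,z)t-h(zt,y)+h(z,y)t$ --- which in your language is an antisymmetrized combination of the difference operators $\Delta^{(y)}_{\lambda_t}$ and $\Delta^{(z)}_{\lambda_t}$ --- lowering the top differential degree by one at each step until it reaches $n!\,g_n t_1'\cdots t_n'yz$; it then handles the coefficient $g_n$ by induction on arity, and its concluding substitution $a\mapsto a'$ is the same as your final multiplication by the exponent-zero variables. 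You replace both the antisymmetrization trick and the induction on arity by complete polarization of the full symbol, one variable at a time, extracting leading coefficients; this buys transparent bookkeeping (membership in the ideal becomes a statement about a subspace of polynomials closed under shifts and multiplications) and isolates exactly where characteristic zero is used, at the cost of first setting up the symbol isomorphism. The paper's computation stays with explicit differential monomials and requires no auxiliary formalism, but its intermediate manipulations are harder to motivate; in fact, your formalism explains why they work.
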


\begin{proof}
Suppose that $f\in \CDi(k)$. We may write  
 \[
f=g_n a_k^{(n)}+g_{n-1}a_k^{(n-1)}+\ldots+g_1a_k'+g_{0}a_k,
 \]
where $g_i\in \CDi(k-1)$ and $g_n\ne 0$. (We suppress the arguments $a_1,\ldots,a_{k-1}$ that will remain unchanged throughout a long calculation that we shall perform.) It is clear that
 \[
h_0(a_1,\ldots,a_{k-1},a_k,a_{k+1}):=f(a_1,\ldots,a_{k-1},a_k)a_{k+1}-f(a_1,\ldots,a_{k-1},a_{k+1})a_k \]
is an element of the operadic ideal generated by $f$, and we have
 \[
h_0(a_1,\ldots,a_{k+1})=\sum_{s=1}^ng_s(a_1,\ldots,a_{k-1})(a_k^{(s)}a_{k+1}-a_{k+1}^{(s)}a_k).
 \]
In the calculation that follows, we shall denote for brevity $a_k=y$, $a_{k+1}=z$, $a_{k+2}=t$. We observe that $h_0$ is a linear combination of $y^{(s)}z-z^{(s)}y$ with $1\leq s\leq n$, and the term with $s=n$ appears with the coefficient $g_n$. 
Let us now define 
 \[
h_1:=h_0(yt,z)-h_0(y,z)t-h_0(zt,y)+h_0(z,y)t. 
 \]
Since we have 
\begin{multline*}
h_0(yt,z)-h_0(y,z)t=\\ \sum_{s=1}^ng_s\left(\sum_{s=0}^s\binom{s}{i}y^{(s-i)}t^{(i)}z-z^{(s)}yt-(y^{(s)}z-z^{(s)}y)t\right)=
\sum_{s=1}^ng_s\left(\sum_{s=1}^s\binom{s}{i}y^{(s-i)}t^{(i)}z\right),
\end{multline*}
we may write
\begin{multline*}
h_1(y,z,t)=h_0(yt,z)-h_0(y,z)t-h_0(zt,y)+h_0(z,y)t=\\ 
\sum_{s=1}^ng_s\left(\sum_{i=1}^s\binom{s}{i}y^{(s-i)}t^{(i)}z-\sum_{i=1}^s\binom{s}{i}z^{(s-i)}t^{(i)}y\right)=
\sum_{s=1}^ng_s\sum_{i=1}^s\binom{s}{i}t^{(i)}(y^{(s-i)}z-z^{(s-i}y)=\\
\sum_{s=1}^{n-1}\sum_{i=1}^{n-s}g_{s+i}\binom{s+i}{i}t^{(i)}(y^{(s)}z-z^{(s)}y).
\end{multline*}

We observe that the polynomial $h_1(y,z,t)$ is also a linear combination of the polynomials $y^{(s)}z-z^{(s)}y$, but we now have $1\leq s\leq n-1$, and the term with $s=n-1$ appears with the coefficient $ng_nt'$. Iterating this procedure, we may go all the way to the polynomial $h_{n-1}(y,z,t_1,\ldots,t_{n-1})=n!g_nt_1'\cdots t_{n-1}'(y'z-yz')$. Finally, 
 \[
h_{n-1}(yt_n,z,t_1,\ldots,t_{n-1})-h_{n-1}(y,z,t_1,\ldots,t_{n-1})t_n=n!g_nt_1'\cdots t_{n-1}'t_n'yz.
 \]
Since $g_n\in\CDi(k-1)$, we may proceed by induction on arity, concluding that the ideal generated by $f$ contains a polynomial of the form $a_1'\cdots a_p' a_{p+1}\cdots a_{p+q}$ for some $p,q$. It remains to use the substitutions $a_{p+i}\mapsto a_{p+i}'$ to transform this into $a_1'\cdots a_m'$ with $m=p+q$, completing the proof.
\end{proof}

This result has the following amusing corollary which one might view as a counterpart of the celebrated theorem of Kaplansky on primitive PI-rings~\cite{MR25451}. 

\begin{corollary}\label{cor:Kapl}
Suppose that $K\colon\k$ is a field extension of zero characteristic, and that $D\colon K\to K$ is a nonzero $\k$-linear derivation. Then $K$ does not satisfy any differential identity.
\end{corollary}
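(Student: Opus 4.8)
The plan is to prove the contrapositive via Theorem~\ref{t2}. Suppose, for contradiction, that the differential field $(K,D)$ satisfies some nontrivial differential identity, i.e.\ there is a nonzero $f\in\CDi(k)$ (a multilinear differential polynomial) vanishing identically on $K$. Since $f$ is nonzero, Theorem~\ref{t2} tells us that the operadic ideal it generates contains an element of the form $a_1'\cdots a_m'$ for some $m\ge 1$. The key observation is that every element of this ideal, being obtained from $f$ by the operadic operations (substitution into slots of $\CDi$-operations and composition with the generators $\mu,\delta$), again yields a differential identity satisfied by $K$: identities are closed under these operations precisely because the ideal they generate is a $\CDi$-submodule. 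Hence $a_1'\cdots a_m'$ is a differential identity of $K$ as well.

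Next I would unwind what it means for $K$ to satisfy $a_1'\cdots a_m'=0$. Evaluating this multilinear identity, it says that $D(x_1)\,D(x_2)\cdots D(x_m)=0$ for all $x_1,\dots,x_m\in K$. Since $D\neq 0$, pick $a\in K$ with $D(a)\neq 0$; setting $x_1=\dots=x_m=a$ gives $D(a)^m=0$. But $K$ is a field, hence an integral domain of characteristic zero, so $D(a)^m=0$ forces $D(a)=0$, contradicting the choice of $a$. This contradiction shows that $K$ cannot satisfy any nontrivial differential identity, which is the assertion.

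The only subtlety worth flagging is the passage from ``$f$ vanishes on $K$'' to ``$a_1'\cdots a_m'$ vanishes on $K$''. One must be careful that Theorem~\ref{t2} is a statement inside the \emph{operad} $\CDi$, i.e.\ about formal multilinear differential polynomials, whereas an identity of $K$ is an evaluation statement. The bridge is the standard fact that the differential identities of a fixed $\CDi$-algebra form a $T$-ideal, equivalently that their multilinear parts form a two-sided operadic ideal in $\CDi$; consequently the ideal generated by $f$ inside $\CDi$ consists entirely of identities of $K$. This is the main (and essentially only) point requiring care, and it is precisely the operadic reformulation the paper has set up. With that in hand, the argument is a short evaluation-and-integral-domain computation. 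Note finally that the hypothesis that $K$ is a field (not merely a commutative algebra) is used exactly to pass from the product being zero to a single factor being zero.
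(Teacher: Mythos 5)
Your proof is correct and is precisely the argument the paper intends: the corollary is stated as an immediate consequence of Theorem~\ref{t2}, obtained by passing (in characteristic zero) to a multilinear identity, noting that identities of $K$ form an operadic ideal so $a_1'\cdots a_m'$ is also an identity, and then using that a field has no nonzero nilpotents to contradict $D\neq 0$. The two subtleties you flag (linearization and the $T$-ideal/operadic-ideal bridge) are exactly the right ones, and you handle both correctly.
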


We were informed by Pogudin that this corollary, and even its version for several commuting derivations appears in the work of Kolchin \cite[Sec.~3]{MR7011} on primitive elements in differential fields. Pogudin also informed us that in his own PhD thesis, a stronger version of Corollary \ref{cor:Kapl} is established (by different methods): a prime differential algebra with a nonzero derivation does not satisfy any differential identity  \cite[Prop.~2.2.3]{Pogudin}. 

\subsection{Specht property for differential identities}\label{sec:dif-Specht}

We shall now establish that every system of multilinear differential identities admits a finite basis. In operadic terms, we prove the following result.

\begin{theorem}\label{th:CDSpecht}
Every ideal of the operad $\CDi$ is finitely generated.
\end{theorem}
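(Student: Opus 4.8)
The plan is to run a Gr\"obner/standard-basis style argument in the monomial basis of $\CDi$, with the well-quasi-order supplied by Higman's lemma. Recall that $\CDi(n)$ has as a basis the differential monomials $a_1^{(i_1)}\cdots a_n^{(i_n)}$, which I encode by their exponent tuples $(i_1,\dots,i_n)$; since $S_n$ permutes the generators, an $S_n$-orbit of monomials is the same as a finite multiset of nonnegative integers, which I call its \emph{type}. The starting observation is that two ideal operations act on a single monomial \emph{without} producing Leibniz sums: substituting a differentiated generator $b^{(l)}$ into the $j$-th slot simply replaces the order $i_j$ by $i_j+l$, and multiplying by a fresh generator $a_{n+1}^{(e)}$ simply appends the order $e$. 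On the level of types these two moves generate exactly the \emph{domination preorder} $\preceq$ (raise some entries, adjoin new entries), and by Higman's lemma \cite{MR49867} the set of finite multisets of $\mathbb{N}$ is well-quasi-ordered by $\preceq$.

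Next I would fix a \emph{type-first} monomial order: a total order refining total degree and a chosen linear extension of $\preceq$ on types, breaking ties inside a type by a positional lexicographic rule. Two features matter. First, the largest type $M(g)$ occurring in a nonzero $g$ is $S_n$-invariant, and the two elementary operations raise $M(g)$ monotonically; second, on each fixed $\CDi(n)$ the order is a well-order. For an ideal $I$, reduction of $g\in I$ proceeds by cancelling its top-type component; since the operations used to build the canceling elements can be taken of the same arity as $g$, this reduction stays inside $\CDi(n)$ and terminates by the well-ordering. Consequently it suffices to find a finite subset of $I$ whose generated ideal realizes, for every type $M$, the full $S_n$-submodule $P_M(I)\subseteq \CDi(n)_M\cong\k[S_n/S_M]$ consisting of the type-$M$ components of elements of $I$.

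The combinatorial heart is then to show that the family $\{P_M(I)\}_M$ is controlled by finitely many generators. It is indexed by the well-quasi-order of types and is compatible with the equivariant transition maps induced by the elementary operations, so one may view it as a module over the ``algebra of operations'' (a twisted-commutative/polynomial-functor-type structure) generated by slot-raising and appending. The goal is a \emph{Noetherianity} statement for such modules: using the well-quasi-order of types together with the finite-dimensionality of each $\k[S_n/S_M]$ and equivariance, only finitely many types carry module-generators not already induced from smaller types, and at each of those only finitely many are needed. Concretely, the minimal types form a finite set by Higman's lemma, and an ascending-chain argument along the wqo, feeding on the finite length of the modules $\k[S_n/S_M]$, should rule out new generators appearing at infinitely many larger types.

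Granting this, I would pick elements $f_1,\dots,f_r\in I$ realizing the finitely many necessary module-generators and verify, by the reduction above, that they generate $I$. Given $g\in I$ with top type $M$ (so $|M|$ equals the arity of $g$), applying slot-raising, appending, and permutations to the $f_s$ produces elements of $\langle f_1,\dots,f_r\rangle$ of that same arity whose top-type-$M$ components fill $P_M(I)$; in particular one of them has the same top-type component as $g$, and subtracting it lowers $M(g)$, so the well-ordering finishes the induction. I expect the genuine obstacle to be the step of the previous paragraph: the bare monomial combinatorics gives the well-quasi-order immediately, but upgrading it to a Noetherianity statement that \emph{simultaneously} controls the symmetric-group module structure---so that finitely many generators suffice even though new submodules can appear at higher arities, as already happens for $\langle a_1^{(3)}a_2-a_1a_2^{(3)}\rangle$---is the substantive point, and is exactly the place where Higman's lemma must be applied indirectly, to an enriched object encoding both the orders and the symmetry rather than to monomials alone.
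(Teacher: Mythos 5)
Your proposal stalls exactly where you say it does, and the honesty does not repair the gap: the ``Noetherianity statement'' for the system $\{P_M(I)\}_M$ of $S_n$-submodules with equivariant transition maps is not a reduction of the theorem, it \emph{is} the theorem, merely restated in an enriched form. The ascending-chain sketch you offer does not close it. The types where new module-generators could appear can form an infinite strictly increasing chain $M_1\prec M_2\prec\cdots$; the spaces $\k[S_n/S_{M_i}]$ live in different arities, so their individual finite length produces no single chain that must stabilize, and nothing in your argument compares $P_{M_{i+1}}(I)$ with the image of $P_{M_i}(I)$ under the transition maps -- which is precisely what must be controlled. (There is also a smaller unaddressed point: your monomial order uses ``a chosen linear extension of $\preceq$ on types,'' but an arbitrary linear extension of the domination order on multisets need not be preserved by slot-raising and appending, so the monotonicity you rely on for reduction is not automatic.) Making your strategy rigorous amounts to proving that the category of types with these operations is a Gr\"obner category in the sense of Sam--Snowden, and the proof of that statement requires a compatible well-ordering of positioned monomials and a divisibility argument -- i.e., the very content you have deferred.

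The paper's proof avoids the equivariant formulation entirely, and the device that makes this possible is the one you discarded when you passed to $S_n$-orbits: it keeps monomials \emph{positioned}, encoding $e=a_1^{(i_1)}\cdots a_n^{(i_n)}$ by the sequence $\alpha(e)=(i_1,\dots,i_n)$ and ordering monomials of a fixed arity lexicographically, so that every element has a \emph{single} leading monomial rather than a top-type component in a permutation module. One checks that the two elementary operations (the substitution $a_k\mapsto a_k'$ and multiplication by a fresh variable) take leading term to leading term. Then, assuming $I$ is not finitely generated, one builds a minimal bad sequence $f_1,f_2,\dots$, each $f_k$ chosen in $I$ with smallest possible leading term outside $\langle f_1,\dots,f_{k-1}\rangle$, and applies Higman's lemma not to multisets but to the sequences $\alpha(\hat{f_k})\in\mathbb{N}^*$, with the exotic order on $\mathbb{N}$ in which $0$ is incomparable to positive integers (so degree-$0$ slots must match degree-$0$ slots). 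This yields $s<t$ with $\hat{f_s}$ dividing $\hat{f_t}$; multiplications insert the missing degree-$0$ slots and substitutions raise the positive degrees, producing $h\in\langle f_s\rangle$ with $\hat{h}=\hat{f_t}$ \emph{exactly}. Then $f_t-h\in I$ has strictly smaller leading term, hence lies in $\langle f_1,\dots,f_{t-1}\rangle$ by minimality of the choice of $f_t$, a contradiction. No statement about generating $S_n$-submodules is ever needed, because subsequence-divisibility of positioned sequences already encodes the matching of arguments that your orbit-level formulation tries to recover through module theory.
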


\begin{proof}
Let us assign to the basis element $e=a_1^{(i_1)}\cdots a_{n}^{(i_{n})}\in\CDi(n)$ the element $\alpha(e)\in \mathbb{N}^n$, given by the sequence $(i_1,\ldots,i_n)$ of all differential degrees appearing in $e$, in order of appearance. Using this assignment, we may define an ordering of basis elements of the same arity $n$: we say that $e\preceq e'$ if $\alpha(e)\le\alpha(e')$ lexicographically. 

An obvious property of the ordering that we just introduced is the following compatibility with the product and the derivation: if $e=a_1^{(i_1)}\cdots a_{n}^{(i_{n})}$ is the leading term of an element $f\in \CDi$, then
\begin{itemize}
\item  the leading term of $f(a_1,\ldots, a_{k-1}, a_k',a_{k+1},\ldots,a_n)$ is 
 \[
e(a_1,\ldots, a_{k-1}, a_k',a_{k+1},\ldots,a_n)=a_1^{(i_1)}\cdots a_{k-1}^{(i_{k-1})} a_k^{(i_k+1)}a_{k+1}^{(i_{k+1})}\cdots a_{n}^{(i_{n})},
 \]
\item the leading term of $f(a_1,\ldots,a_{k-1},a_{k+1},\ldots,a_{n+1})a_k$ is 
 \[
e(a_1,\ldots,a_{k-1},a_{k+1},\ldots,a_{n+1})a_k 
=a_1^{(i_1)}\cdots a_{k-1}^{(i_{k-1})}a_k a_{k+1}^{(i_{k})}\cdots a_{n+1}^{(i_{n})}.
 \]
\end{itemize}

Suppose that $I$ is an ideal of $\CDi$ that is not finitely generated. Then we can define an infinite sequence of elements $\{f_k\}_{k\in\mathbb{N}}$ by choosing as $f_k$ the element of~$I$ with the smallest possible leading term $\hat{f_k}$ (with respect to the order $\preceq$) that does not belong to the ideal of $\CDi$ generated by $f_1,\ldots, f_{k-1}$. We shall assume that for each $k$ the leading coefficient of $f_k$ is equal to $1$.

Let us consider an exotic partial order on $\mathbb{N}$ for which the positive elements are ordered in the usual way, and the element $0$ is incomparable with any other elements. We extend this partial order to a partial order of the set $\mathbb{N}^*$ of all finite sequences of elements of $\mathbb{N}$ by saying that $\alpha\trianglelefteq\beta$ if $\alpha$ is majorized by a subsequence of $\beta$. By a result of Higman \cite[Th.~4.3]{MR49867}, $(\mathbb{N}^*,\trianglelefteq)$ is a partial well order. This implies that for any sequence $\{\alpha_s\}$ of elements of $\mathbb{N}^*$, there exists $s<t$ such that $\alpha_s\trianglelefteq \alpha_t$. If we define $\alpha_s:=\alpha(f_s)$, this means that one can find $s<t$ such that there exists a subsequence of the sequence of arguments of the monomial~$\hat{f_t}$ such that the arguments of differential degree $0$, as well as of strictly positive differential degrees in that subsequence are in one-to-one correspondence with the arguments of $\hat{f_s}$, and, moreover, for each argument of strictly positive differential degree, the differential degree in $\hat{f_t}$ is greater than or equal to the corresponding differential degree in $\hat{f_s}$. We shall refer to this by simply saying that $\hat{f_s}$ divides $\hat{f_t}$.

Let us show that $f_t$ belongs to the ideal generated by $f_s$. Applying several multiplications by variables of differential degree $0$, we may replace $f_s$ by an element $g$ from the ideal generated by $f_s$ such that $\hat{g}$ divides $\hat{f_t}$ and that $\hat{g}$ and $\hat{f_t}$ have the same amount of arguments of differential degree $0$. Furthermore, using the substitutions $a_k\mapsto a_k'$, we may replace $g$ by an element $h$ from the ideal generated by $f_s$ such that $\hat{h}$ divides $\hat{f_t}$ and the sequences of strictly positive differential degrees in $\hat{h}$ and $\hat{f_t}$ are the same, which forces $\hat{h}=\hat{f_t}$. The difference $f_t-h$ has the leading term smaller than that of $f_t$, and hence belongs to the ideal generated by $f_1,\ldots, f_{t-1}$. Since $h$ belongs to the ideal generated by $f_s$, we conclude that $f_t=(f_t-h)+h$ belongs to the ideal generated by $f_1,\ldots, f_{t-1}$, which is a contradiction. It follows that the ideal $I$ is finitely generated.
\end{proof}

\begin{corollary}
Over a field of zero characteristic, every system of differential identities is equivalent to a finite one.
\end{corollary}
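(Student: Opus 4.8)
The plan is to derive this corollary from Theorem~\ref{th:CDSpecht} through the standard dictionary relating systems of multilinear identities to ideals of the operad $\CDi$, together with the fact that in characteristic zero every identity is equivalent to its multilinear components.

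First I would invoke the multilinearization principle in characteristic zero: a differential identity $p(x_1,\ldots,x_k)=0$ is equivalent to the finite system of its full polarizations, each of which is multilinear. Performing the substitutions $x_i\mapsto x_{i,1}+\cdots+x_{i,d_i}$ and extracting the components that are linear in each new variable yields multilinear identities that together imply $p=0$ and are implied by it. Hence any system $S$ of differential identities is equivalent to the system $\widetilde{S}$ of all multilinear identities obtained by polarizing the members of $S$.

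Next I would package $\widetilde{S}$ operadically. The consequences of $S$ that are multilinear of arity $n$ form an $S_n$-submodule of $\CDi(n)$, and the resulting collection $I=\{I(n)\}_{n\ge1}$ is precisely an ideal of $\CDi$: it is closed under substituting arbitrary differential polynomials for its arguments, which gives the right $\CDi$-module structure corresponding to the T-ideal closure under endomorphisms, and it is closed under substitution into a single slot of a larger differential operation, which gives the infinitesimal left module structure, matching the picture recalled in Section~\ref{sec:recoll}. By Theorem~\ref{th:CDSpecht}, this ideal is finitely generated, say $I=(g_1,\ldots,g_r)$ for some multilinear differential polynomials $g_i$.

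Finally I would translate back. Each $g_i$ lies in $I$ and is therefore a consequence of $S$; conversely every element of $\widetilde{S}$ lies in $I=(g_1,\ldots,g_r)$ and is thus a consequence of the finite system $\{g_1=0,\ldots,g_r=0\}$. Combined with the equivalence of $S$ and $\widetilde{S}$ from the first step, this shows that $\{g_1=0,\ldots,g_r=0\}$ is equivalent to $S$. The only delicate point is the faithfulness of the reduction to multilinear identities, which is exactly where zero characteristic enters; everything else is a formal consequence of the operadic reformulation and the finite generation statement of Theorem~\ref{th:CDSpecht}.
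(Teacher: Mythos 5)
Your proposal is correct and follows exactly the same route as the paper, which simply notes that in characteristic zero every identity is equivalent to a multilinear one and then invokes Theorem~\ref{th:CDSpecht}; you have merely spelled out the standard polarization argument and the dictionary between T-ideals of multilinear identities and operad ideals that the paper leaves implicit.
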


\begin{proof}
In that case, every identity is equivalent to a multilinear one, so Theorem~\ref{th:CDSpecht} applies. 
\end{proof}

\section{Identities of Novikov algebras}\label{sec:IdNov}

In this section, we prove all the main results of the article: various nilpotence and solvability properties for Novikov algebras satistying nontrivial identities, as well as the Specht property for multilinear Novikov identities.

\subsection{Monomials in infinitesimal \texorpdfstring{$\Nov$}{Nov}-bimodules}

In this section, we prove an auxiliary result that we shall later use to study Novikov algebras satistying nontrivial identities. Considering the Novikov operad as a suboperad of the operad $\CDi$ allows us to view the operad $\CDi$ as an infinitesimal bimodule over the operad $\Nov$. Considering sub-bimodules of this infinitesimal bimodule is very natural; it is somewhat similar to considering weak identities \cite{MR4285755}, but is more restrictive, since we only allow substitutions of Novikov elements and multiplying by a Novikov element using a Novikov product (only the first constraint would appear for a weak identity). In this context, we prove the following result resembling Theorem~\ref{t2}. 

\begin{theorem}\label{th:mon-bimod}
Let $f$ be a nonzero element of the operad $\CDi$. If $\mathrm{char}(\k)=0$, the infinitesimal $\Nov$-bimodule of the operad $\CDi$ generated by $f$ contains an element of the form
 \[
a_1^{(2)}\cdots a_p^{(2)} a_{p+1}\cdots a_{p+q}
 \]
for some $p,q\geq 1$.
\end{theorem}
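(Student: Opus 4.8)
The plan is to imitate the proof of Theorem~\ref{t2}, running an induction on the arity of $f$ in which one variable at a time is eliminated by extracting the coefficient of its highest derivative. The essential difficulty is that inside the infinitesimal $\Nov$-bimodule one may neither multiply by a plain variable nor apply $\delta$ directly: every admissible move either substitutes a Novikov element (such as $a_i\circ a_j=a_ia_j'$, or a longer Novikov word) into an argument of the current element, or plugs the current element into a single slot of a Novikov monomial. I would therefore build the whole argument out of two composite bimodule operations. The first is the \emph{peeling} operation
\[
P_z(g):=g\big|_{a_k\mapsto a_k\circ z}-g\circ z,
\]
where $z$ is a fresh variable, the substitution $a_k\mapsto a_k\circ z=a_kz'$ is a right-module move, and $g\circ z=gz'$ inserts $g$ into the first slot of the product, an infinitesimal left-module move. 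The second is the \emph{multiplication} operation
\[
L_{w,z}(g):=w\circ(g\circ z)-(w\circ g)\circ z=w\,g\,z'',
\]
a genuine bimodule element (a difference of two iterated Novikov moves) that multiplies $g$ by a fresh order-$0$ variable $w$ and a fresh order-$2$ variable $z''$.

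Writing $f=\sum_{s=0}^{n}g_sa_k^{(s)}$ with $g_s$ independent of $a_k$ and $g_n\neq 0$, a direct Leibniz computation gives
\[
P_z(f)=\sum_{s=1}^{n}g_s\sum_{i=0}^{s-1}\binom{s}{i}a_k^{(i)}z^{(s-i+1)},
\]
since the $i=s$ contributions cancel against $g\circ z$. Thus $P_z$ annihilates every summand in which $a_k$ occurs at order $0$, lowers the top order of $a_k$ by one, and --- crucially --- attaches to the new leading coefficient the fresh variable $z$ at derivative order exactly $2$; this is precisely where second derivatives enter. Applying $P_{z_1},\dots,P_{z_n}$ with distinct fresh variables, I expect the outcome to be exactly $n!\,g_n\,z_1''\cdots z_n''\,a_k$, with all intermediate terms disappearing: $a_k$ has been demoted to order $0$ and $n$ new order-$2$ variables have appeared. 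Because $g_n\in\CDi(k-1)$ does not involve $a_k$, the accumulated order-$2$ variables and $a_k$ pass through as spectators under any further peeling of a variable of $g_n$, so I would recurse on $g_n$, peeling a variable that occurs at positive order, accumulating further order-$2$ variables while the already-peeled variables remain at order $0$. The recursion terminates in an exact monomial that is a product of order-$2$ variables and order-$0$ variables, i.e.\ of the shape $a_1^{(2)}\cdots a_p^{(2)}a_{p+1}\cdots a_{p+q}$.

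Two points then finish the argument. First, $p\ge 1$ because the very first variable peeled occurs at positive order, producing at least one order-$2$ variable, and $q\ge 1$ because that same variable survives as an order-$0$ spectator; the only exceptional case is $f=c\,a_1\cdots a_k$ with all variables at order $0$, where no peeling is possible and I would instead apply $L_{w,z}$ once to obtain $c\,w\,a_1\cdots a_k\,z''$, already of the required form. Second, and this is the main obstacle, one must justify the exact cancellation claimed above: that iterating $P$ exactly $n$ times kills every junk term and leaves the clean monomial $n!\,g_n z_1''\cdots z_n''\,a_k$. The mechanism I would formalize is that each junk term necessarily carries some $z_i$ at order $\ge 3$ --- an order that peeling never lowers, since $P$ differentiates only the peeled variable --- so junk can never coincide with the clean lineage, whose $z$'s all sit at order $2$; meanwhile every junk term is driven to order $0$ in the peeled variable within $n$ steps and is then annihilated by the next peeling. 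Converting this into clean bookkeeping, parallel to but combinatorially heavier than the computation with the antisymmetric expressions $y^{(s)}z-z^{(s)}y$ in the proof of Theorem~\ref{t2}, is the step that will demand the most care.
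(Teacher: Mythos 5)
Your proposal is correct, and it is a genuine streamlining of the paper's argument rather than a departure from it. The paper's basic object
\[
h_0(a_k,a_{k+1})=f(a_ka_{k+1}')-f(a_k)a_{k+1}'-f(a_k'a_{k+1})+a_k'f(a_{k+1})
\]
is exactly the antisymmetrization in $a_k,a_{k+1}$ of your peeling operation $P_z$, and both proofs then iterate the same move and finish with the same induction on arity applied to $g_n$. What antisymmetrizing buys the paper is parallelism with its proof of Theorem~\ref{t2}, where it is genuinely needed to kill the order-zero term $g_0a_k$ (there one multiplies by an underived variable, which annihilates nothing); in the bimodule setting the subtraction built into $P_z(g)=g|_{a_k\mapsto a_k\circ z}-g\circ z$ already kills the order-zero part, so dropping the antisymmetrization loses nothing and gains a lot: the paper has to track the antisymmetric elements $f_{p,q}$ and argue that top-degree coefficients remain non-negative and not all zero, ending with a coefficient $\lambda_n$ that is merely ``a nonzero integer,'' whereas your computation closes exactly, $P_{z_n}\cdots P_{z_1}(f)=n!\,g_n\,z_1''\cdots z_n''\,a_k$. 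Moreover, the cancellation you flag as the delicate step is in fact immediate: $P_z$ sends a monomial containing $a_k^{(s)}$ to monomials containing $a_k^{(i)}$ with $i\le s-1$ and kills those containing $a_k^{(0)}$, so a monomial surviving all $n$ peelings must have its $a_k$-order strictly decreasing and positive before each of the $n$ steps, which forces initial order exactly $n$ and drops of exactly $1$, i.e.\ the clean lineage with coefficient $\binom{n}{n-1}\binom{n-1}{n-2}\cdots\binom{1}{0}=n!$. Finally, your explicit treatment of the degenerate case $f=c\,a_1\cdots a_k$ via $L_{w,z}(g)=w\,g\,z''$ covers a case on which the paper is silent: when every variable occurs underived, the paper's $h_0$ vanishes identically, and its argument (stated only for $n>2$) does not apply.
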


\begin{proof}
Suppose that $f\in \CDi(k)$. We may write  
 \[
f=g_n a_k^{(n)}+g_{n-1}a_k^{(n-1)}+\ldots+g_1a_k'+g_{0}a_k,
 \]
where $g_i\in \CDi(k-1)$ and $g_n\ne 0$. (We suppress the arguments $a_1,\ldots,a_{k-1}$ that will remain unchanged throughout a long calculation that we shall perform.)

Suppose that $n>2$. Let us consider the element 
\[
h_0(a_k,a_{k+1})=f(a_ka_{k+1}')-f(a_k)a_{k+1}'-f(a_k'a_{k+1})+a_k'f(a_{k+1})
\]
of the infinitesimal $\Nov$-bimodule generated by $f$. It is clear that 
\[
h_0(a_k,a_{k+1}) =\sum_{s=1}^ng_s\sum_{p=1}^s\binom{s}{p}(a_k^{(s-p)}a_{k+1}^{(p+1)}-a_k^{(p+1)}a_{k+1}^{(s-p)}).
\]
 
Let us denote for brevity $f_{p,q}(a_k,a_{k+1})=a_k^{(p)}a_{k+1}^{(q)}-a_k^{(q)}a_{k+1}^{(p)}$. Since $f_{p,q}=-f_{q,p}$, some terms in the formula for $h_0$ are proportional. Collecting the proportional terms, we obtain the formula
 \[
h_0(a_k,a_{k+1})=\sum_{s=1}^ng_s\left(f_{0,s+1}+sf_{1,s}+\sum_{2\leq p<\lfloor\frac{s+1}{2}\rfloor}
\left(\binom{s}{p}-\binom{s}{p-1}\right)f_{p,s+1-p}
\right).
 \]
In the calculation that follows, we shall denote for brevity $a_k=y$, $a_{k+1}=z$, $a_{k+2}=t$.

We observe that $h_0$ is a linear combination of $f_{p,q}(y,z)$ with $p<q$, $p+q=2,\ldots,n+1$, and each such element with $p+q=n+1$ appears with the coefficient $\alpha_{p,q} g_n$, where $\alpha_{p,q}$ is a positive integer. Let us now define
 \[
h_1:=h_0(yt',z)-h_0(y,z)t'-h_0(zt',y)+h_0(z,y)t', 
 \]
which clearly belongs to the infinitesimal $\Nov$-bimodule generated by $h_0$, and hence to the infinitesimal $\Nov$-bimodule generated by $f$.
Since we have 
 \[
f_{p,q}(yt',z)-f_{p,q}(y,z)t'=
\sum_{i=1}^p\binom{p}{i}y^{(p-i)}t^{(i+1)}z^{(q)}-\sum_{j=1}^l\binom{q}{j}y^{(q-j)}t^{(j+1)}z^{(p)},
 \]
therefore
\begin{multline*}
f_{p,q}(yt',z)-f_{p,q}(y,z)t'-f_{p,q}(zt',y)+f_{p,q}(z,y)t'=\\
\sum_{i=1}^p\binom{p}{i}t^{(i+1)}(y^{(p-i)}z^{(q)}-y^{(q)}z^{(p-i)})-\sum_{j=1}^l\binom{q}{j}t^{(j+1)}(y^{(q-j)}z^{(p)}-y^{(p)}z^{(q-j)})=\\
\sum_{i=1}^p\binom{p}{i}t^{(i+1)}f_{p-i,q}(y,z)+\sum_{j=1}^q\binom{q}{j}t^{(j+1)}f_{p,q-j}(y,z).
\end{multline*}
It follows that $f_{p,q}(yt',z)-f_{p,q}(y,z)t'-f_{p,q}(zt',y)+f_{p,q}(z,y)t'$ is a linear combination of the polynomials $f_{a,b}(y,z)$ for $a+b=p+q-j$ for some $j\geq 1$. In particular, if $p<q$ and we focus on $j=1$, we get two terms  
 \[
kt''f_{k-1,l}(y,z)+ lt''f_{k,l-1}(y,z),
 \]
where the second polynomial might vanish (if $k=l-1$), but both coefficients are non-negative.

Collecting our observations, we see that $h_1$ is a linear combination of the polynomials $f_{p,q}(y,z)$ with $p<q$ and $p+q=1,\ldots,n$, and such polynomials with $p+q=n$ appear with non-negative integer coefficients not all of which are zeros. Iterating this procedure, we may go all the way to the polynomial $h_{n-1}=\lambda_ng_nt_1''\cdots t_{n-1}''(y'z-yz')$, where $\lambda_n$ is a nonzero integer. 
Finally, we consider the element 
 \[
h_n:=h_{n-1}(yt_n',z)-h_{n-1}(y,z)t_n'=\lambda_ng_nt_1''\cdots t_{n-1}''t_n''yz.
 \]
Since $g_n\in\CDi(k-1)$, we may proceed by induction on arity, obtaining an element of the required form 
 \[
a_1^{(2)}\cdots a_p^{(2)} a_{p+1}\cdots a_{p+q}
 \]
for some $p,q\geq 1$.
\end{proof}

\subsection{Iterated associators in ideals of the Novikov operad}

In this section, we apply Theorem \ref{th:mon-bimod} to study Novikov algebras satisfying nontrivial identities. 

\begin{theorem}\label{th:mon-nov}
Let $N$ be a Novikov algebra over a field of zero characteristic satisfying a nontrivial identity. Then $N$ is right associator nilpotent. 
\end{theorem}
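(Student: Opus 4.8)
The plan is to translate the statement about Novikov algebras into a statement about the Novikov operad $\Nov$, and then to exploit the inclusion $\imath\colon\Nov\to\CDi$ together with Theorem~\ref{th:mon-bimod}. A Novikov algebra $N$ satisfies a nontrivial identity precisely when the $T$-ideal of identities is nonzero; passing to multilinear consequences (legitimate since $\mathrm{char}(\k)=0$), this means that there is a nonzero element $f$ of $\Nov$ that vanishes identically on $N$. Under the injection $\imath$, we may regard $f$ as a nonzero element of $\CDi$. The key structural observation is that the property ``right associator nilpotent,'' i.e.\ $(N,N,N)^R_p=0$, is itself expressible as the vanishing on $N$ of a family of elements of $\Nov$: the iterated right associators $(\cdots((\nu,\nu,\nu),\nu,\nu)\cdots)$ are specific operadic elements, and I would first identify the smallest sub-bimodule-theoretic object controlling them.

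Concretely, I would argue as follows. Apply Theorem~\ref{th:mon-bimod} to the nonzero element $f\in\CDi$ (viewed via $\imath$): the infinitesimal $\Nov$-bimodule generated by $f$ inside $\CDi$ contains a \emph{differential monomial} of the form $a_1^{(2)}\cdots a_p^{(2)}a_{p+1}\cdots a_{p+q}$ for some $p,q\geq 1$. Since $f$ is an identity of $N$ and the infinitesimal $\Nov$-bimodule operations correspond exactly to substituting Novikov elements of $N$ and multiplying by Novikov elements of $N$, every element of that sub-bimodule is again an identity of $N$. Therefore $N$ satisfies the differential identity $a_1^{(2)}\cdots a_p^{(2)}a_{p+1}\cdots a_{p+q}=0$, interpreted in the universal differential enveloping algebra via the embedding of Bokut--Chen--Zhang; equivalently, this is a nonzero element of $\CDi$ lying in the image of $\imath$ that vanishes on $N$.

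The crux is then to connect this ``high-order-derivative'' monomial identity with right associator nilpotence. The natural bridge is Proposition~\ref{prop:DLbasis}: the image of $\imath$ consists exactly of differential monomials with total differential degree $n-1$ in arity $n$, and the associator $(x,y,z)=(x\circ y)\circ z-x\circ(y\circ z)$ translates, under $x\circ y=xD(y)$, into an expression carrying a \emph{second} derivative, namely $x y'' z'$ up to lower-order terms (since $(x\circ y)\circ z = (xy')(z') $-type terms and $x\circ(y\circ z)=x(yz')'$ produce a $y''$). I would make this precise by computing $\imath$ of the right-iterated associator and checking that its leading differential behaviour is governed by the number of second derivatives $a_i^{(2)}$ appearing, so that a monomial with $p$ factors of differential degree $2$ forces the vanishing of the $p$-th (or a bounded function of $p$) iterated right associator. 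In other words, I expect that the presence of $a_1^{(2)}\cdots a_p^{(2)}a_{p+1}\cdots a_{p+q}$ as an identity of $N$ propagates, through the bimodule structure and the grading by total differential degree, to the vanishing of $(N,N,N)^R_m$ for a suitable $m$ depending on $p$ and $q$.

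The main obstacle will be the last step: verifying rigorously that the monomial identity from Theorem~\ref{th:mon-bimod} actually lies in, or generates within the image of $\imath$, the operadic element representing a sufficiently high iterated right associator. The subtlety is that $a_1^{(2)}\cdots a_p^{(2)}a_{p+1}\cdots a_{p+q}$ need not itself be in the image of $\imath$ (its total differential degree is $2p$, not $p+q-1$), so I cannot directly read it as a Novikov identity; instead I must use the infinitesimal bimodule operations once more to massage it into a genuine Novikov-operad element, while tracking which associators survive. I anticipate that the cleanest route is to work with the ordering/leading-term technology already deployed in the proof of Theorem~\ref{th:CDSpecht}, identifying the leading differential monomial of the iterated associator and matching it against the monomial produced above, so that a descending induction on the number of degree-$2$ slots yields the bound $(N,N,N)^R_p=0$.
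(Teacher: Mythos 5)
Your proposal follows the same overall strategy as the paper (apply Theorem~\ref{th:mon-bimod} to the identity $f$, then identify the resulting monomial with an iterated right associator), but it stalls at precisely the step the paper resolves with a one-line observation, and the workaround you sketch in its place is not needed and would not constitute a proof. You assert that the monomial $a_1^{(2)}\cdots a_p^{(2)}a_{p+1}\cdots a_{p+q}$ produced by Theorem~\ref{th:mon-bimod} ``need not itself be in the image of $\imath$'' and that one must massage it further with leading-term technology and a descending induction. This is exactly the wrong turn. The element $f$ you start from is a Novikov identity, i.e.\ $f\in\imath(\Nov)$, and $\imath(\Nov)$ is a \emph{suboperad} of $\CDi$: every infinitesimal $\Nov$-bimodule operation (substituting Novikov elements into slots of $f$, or plugging $f$ into a slot of a Novikov operation) lands again in $\imath(\Nov)$. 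In other words, the infinitesimal $\Nov$-bimodule generated by $f$ coincides with the operadic ideal of $\Nov$ generated by $f$, so the monomial furnished by Theorem~\ref{th:mon-bimod} \emph{is} automatically in the image of $\imath$. Now Proposition~\ref{prop:DLbasis} finishes the job: an element of the image of arity $n$ has total differential degree $n-1$, so $2p=(p+q)-1$, forcing $q=p+1$; up to a permutation of the arguments, the monomial is then exactly $a_1a_2a_3^{(2)}a_4a_5^{(2)}\cdots a_{2p}a_{2p+1}^{(2)}$, which is (up to sign) the image of the $p$-fold iterated right associator. Hence the ideal of $\Nov$ generated by $f$ contains that iterated associator, and $N$ is right associator nilpotent.

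A secondary but consequential error: your computation of the image of the associator is off. One has $x\circ(y\circ z)=x(yz')'=xy'z'+xyz^{(2)}$, so $(x,y,z)\mapsto (xy')z'-x(yz')'=-xyz^{(2)}$: the second derivative falls on the \emph{third} argument, and there is no $y''$ term and no ``lower-order terms.'' Getting this exact form is what makes the iterated right associator map onto precisely the monomial shape $a_1^{(2)}\cdots a_p^{(2)}a_{p+1}\cdots a_{2p+1}$ delivered by Theorem~\ref{th:mon-bimod} (with $q=p+1$), so the identification is an equality of basis elements rather than a leading-term comparison.
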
 

\begin{proof}
Note that under the embedding of  operads $\Nov\hookrightarrow\CDi$, the associator $(a_1\circ a_2)\circ a_3-a_1\circ(a_2\circ a_3)$ is sent to
 \[
(a_1a_2')a_3'-a_1(a_2a_3')'=-a_1a_2a_3^{(2)}.
 \]
This immediately implies that the image of the element 
 \[
((\ldots ((a_1,a_2,a_3),a_4,a_5),\ldots,),a_{2p},a_{2p+1}) 
 \]
is precisely the differential monomial $a_1a_2a_3^{(2)}a_4a_5^{(2)}\cdots a_{2p}a_{2p+1}^{(2)}$. Thus, we must show that if $f$ is a nonzero element of operad $\Nov$, then under the embedding of  operads $\Nov\hookrightarrow\CDi$, the ideal of $\Nov$ generated by $f$ contains a differential monomial of that form.

It follows from Theorem \ref{th:mon-bimod} that the infinitesimal $\Nov$-bimodule generated by $f$ in the operad $\CDi$ contains an element of the form $a_1^{(2)}\cdots a_p^{(2)} a_{p+1}\cdots a_{p+q}$ for some $p,q\geq 1$. Since $f\in\Nov$, we have
 \[
a_1^{(2)}\cdots a_p^{(2)} a_{p+1}\cdots a_{p+q}\in \Nov,
 \]
which can only happen if $2p=p+q-1$, so $q=p+1$, and, up to a permutation of arguments, this is an element of required form.
\end{proof}

Let us obtain several corollaries of this result. First, we record the following nilpotence result for the commutator $[a,b]=a\circ b-b\circ a$ in Novikov algebras. 

\begin{corollary}\label{cor:com-nil}
Let $N$ be a Novikov algebra over a field of zero characteristic satisfying a nontrivial identity. For some  $n\ge 1$, the identity
 \[
(\ldots((a_1\circ[a_2,a_3])\circ[a_4,a_5])\ldots)\circ[a_{2n},a_{2n+1}]=0 
 \]
holds in~$N$. 
\end{corollary}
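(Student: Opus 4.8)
The plan is to deduce this from Theorem \ref{th:mon-nov} by relating the iterated-commutator expression to an iterated right associator under the embedding $\imath\colon\Nov\hookrightarrow\CDi$. Under this embedding, as computed in the proof of Theorem \ref{th:mon-nov}, the associator $(a_1,a_2,a_3)$ maps to $-a_1a_2a_3^{(2)}$. First I would compute the image of a single commutator-type building block. Since $a\circ b=ab'$, we have
\[
[a,b]=a\circ b-b\circ a=ab'-ba'=ab'-a'b,
\]
and more usefully I would compute the image of the combination $a_1\circ[a_2,a_3]$, which equals $a_1(a_2a_3'-a_2'a_3)'$. Expanding the derivative via the Leibniz rule gives $a_1(a_2'a_3'+a_2a_3''-a_2''a_3-a_2'a_3')=a_1(a_2a_3''-a_2''a_3)=a_1a_2a_3''-a_1a_2''a_3$, so the image of $a_1\circ[a_2,a_3]$ is $a_1a_2a_3^{(2)}-a_1a_2^{(2)}a_3$. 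The leading differential monomial (in the ordering from the proof of Theorem \ref{th:CDSpecht}, or simply the one with the largest differential degree on the last argument) is exactly $a_1a_2a_3^{(2)}$, which is, up to sign, the image of the associator $(a_1,a_2,a_3)$.

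The key observation is therefore that, modulo terms with strictly smaller leading differential data, the image of the iterated expression
\[
(\ldots((a_1\circ[a_2,a_3])\circ[a_4,a_5])\ldots)\circ[a_{2n},a_{2n+1}]
\]
has leading term equal (up to sign) to the image of the fully right-nested associator $((\ldots((a_1,a_2,a_3),a_4,a_5)\ldots),a_{2n},a_{2n+1})$, namely the differential monomial $a_1a_2a_3^{(2)}a_4a_5^{(2)}\cdots a_{2n}a_{2n+1}^{(2)}$. Then I would argue as follows: Theorem \ref{th:mon-nov} guarantees that $N$ is right associator nilpotent, so $(N,N,N)^R_p=0$ for some $p$; taking $n=p$, the iterated right associator vanishes identically in $N$. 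To conclude that the iterated commutator expression also vanishes, I would run an inductive triangular argument, showing that the $\k$-span of iterated associators of a fixed nesting depth coincides, inside the relevant multilinear component of the free Novikov algebra, with the $\k$-span of the corresponding iterated commutator expressions together with lower expressions; since the change of basis is unitriangular with respect to the leading-term ordering, vanishing of all associator expressions forces vanishing of the commutator expression.

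The main obstacle I expect is making the triangularity argument fully rigorous: the naive claim that the commutator expression equals the associator expression plus ``lower-order'' associator expressions needs to be controlled simultaneously across all nesting levels, since at each of the $n$ slots one replaces a plain variable by a commutator and each such replacement introduces a correction term of the form $a_{2j}^{(2)}a_{2j+1}$ in place of $a_{2j}a_{2j+1}^{(2)}$. I would handle this by induction on $n$, peeling off the outermost commutator: writing $w=(\ldots)\circ[a_{2n},a_{2n+1}]$, I would express $w$ as a right-multiplication operation applied to the depth-$(n-1)$ expression and show that each correction term is itself an iterated right associator of depth $n$ built from the same data (with two arguments swapped), hence also lies in $(N,N,N)^R_n$. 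Since every such correction vanishes by right associator nilpotence once $n\ge p$, the entire expression collapses to zero. Carrying the bookkeeping of signs and binomial coefficients through the Leibniz expansions is routine but must be done carefully to confirm that no leading term survives.
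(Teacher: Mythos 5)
Your proposal is correct and follows essentially the same route as the paper: compute the image of $a_1\circ[a_2,a_3]$ under the embedding $\Nov\hookrightarrow\CDi$, observe that second derivatives accumulate in iterated right products so that every term in the expansion is (up to sign) an iterated right associator with permuted arguments, and conclude from Theorem~\ref{th:mon-nov}. The paper's proof is a terser statement of exactly this argument, and your inductive peeling-off of the outermost commutator (rather than the vaguer unitriangularity framing in your middle paragraph, which is unnecessary) is the right way to make it precise.
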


\begin{proof}
Note that under the embedding of  operads $\Nov\hookrightarrow\CDi$, the element $a_1\circ(a_2\circ a_3-a_3\circ a_2)$ is sent to
 \[
a_1(a_2a_3'-a_3a_2')'=a_1a_2a_3^{(2)}-a_1a_2^{(2)}a_3.
 \]
This means that in iterated right products of commutators second derivatives accummulate, and one can argue like in the proof of Theorem \ref{th:mon-nov}.
\end{proof}

For the next corollary, we recall that in every Novikov algebra $N$ over a field of characteristic different from~$2$, the space $[N,N]$ spanned by all commutators is an ideal \cite{MR2419136,TUZ21}.

\begin{corollary}\label{cor:com-right-nil}
Let $N$ be a Novikov algebra over a field of zero characteristic satisfying a nontrivial identity. The commutator ideal $[N,N]$ is right nilpotent (as a Novikov algebra). 
\end{corollary}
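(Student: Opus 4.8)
The plan is to deduce the statement directly from Corollary~\ref{cor:com-nil}, exploiting the fact that the leftmost factor appearing in the identity there is an \emph{unconstrained} element of $N$. Recall first what has to be shown: writing $R^{\langle 1\rangle}=[N,N]$ and $R^{\langle k+1\rangle}=R^{\langle k\rangle}\circ[N,N]$, right nilpotence of the commutator ideal means that $R^{\langle m\rangle}=0$ for some $m$. Concretely, $R^{\langle m\rangle}$ is the linear span of the right-normed products $(\ldots((b_1\circ b_2)\circ b_3)\ldots)\circ b_m$ with all $b_i\in[N,N]$. This is well posed because in characteristic different from~$2$ the space $[N,N]$ is an ideal, hence in particular a subalgebra, so that these products stay inside $[N,N]$.

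First I would fix the integer $n$ supplied by Corollary~\ref{cor:com-nil}, so that
\[
(\ldots((a_1\circ[a_2,a_3])\circ[a_4,a_5])\ldots)\circ[a_{2n},a_{2n+1}]=0
\]
holds in $N$ for all choices of the arguments. The key observation is that the slot occupied by $a_1$ ranges over all of $N$, so I am free to substitute $a_1=[c,d]$ for arbitrary $c,d\in N$. This turns the displayed identity into the vanishing of a right-normed product of $n+1$ commutators,
\[
(\ldots(([c,d]\circ[a_2,a_3])\circ[a_4,a_5])\ldots)\circ[a_{2n},a_{2n+1}]=0.
\]

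Next I would reduce an arbitrary element of $R^{\langle n+1\rangle}$ to such products of commutators. Since $[N,N]$ is by definition spanned by the commutators $[c,d]$, and the right-normed product $(\ldots((b_1\circ b_2)\circ b_3)\ldots)\circ b_{n+1}$ is multilinear in $b_1,\ldots,b_{n+1}$ (the Novikov product being bilinear), expanding each $b_i$ as a linear combination of commutators expresses any spanning element of $R^{\langle n+1\rangle}$ as a linear combination of right-normed products of $n+1$ commutators. By the previous step each such product vanishes, whence $R^{\langle n+1\rangle}=0$ and $[N,N]$ is right nilpotent, with right nilpotency index at most $n+1$.

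There is essentially no obstacle here beyond this bookkeeping; the substantive work is already contained in Corollary~\ref{cor:com-nil}. The one point worth emphasising is that that corollary is deliberately stated with a free first factor, and it is exactly this freedom---allowing a commutator to be placed in the leftmost slot---that upgrades the assertion ``a product of one arbitrary element with $n$ commutators vanishes'' into ``a product of $n+1$ commutators vanishes'', which is precisely the definition of right nilpotence of the commutator ideal.
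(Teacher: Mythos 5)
Your proof is correct and takes exactly the route the paper intends: the paper's own proof consists of the single sentence that the statement ``is an immediate consequence of Corollary~\ref{cor:com-nil}'', and your argument---substituting a commutator $[c,d]$ into the free leftmost slot and then using multilinearity to pass from products of commutators to products of arbitrary elements of $[N,N]$---is precisely the routine bookkeeping the authors left implicit.
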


\begin{proof}
This is an immediate consequence of Corollary \ref{cor:com-nil}.
\end{proof}

Theorem \ref{th:mon-nov} and Corollary \ref{cor:com-nil} admit the following common generalization, which is proved in the exact same way.

\begin{corollary}\label{cor:as-com-nil}
Let $N$ be a Novikov algebra over a field of zero characteristic satisfying a nontrivial identity. The subalgebra of $\mathrm{End}(N)$ generated by all the endomorphisms of the form $x\mapsto (x,y,z)$ and $x\mapsto x[y,z]$ is nilpotent. 
\end{corollary}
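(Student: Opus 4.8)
The plan is to unify the two preceding results (Theorem~\ref{th:mon-nov} and Corollary~\ref{cor:com-nil}) by tracking both types of endomorphisms simultaneously under the embedding $\Nov\hookrightarrow\CDi$. The key observation, already recorded in the two proofs above, is that both generating endomorphisms, when applied and transported into $\CDi$, have the effect of appending a factor carrying \emph{exactly two} derivatives: right-multiplying by the associator $(\,\cdot\,,y,z)$ sends $x$ to (the image of) $xy z^{(2)}$ up to sign, while right-multiplying by the commutator $x[y,z]$ sends $x$ to $x y z^{(2)}-x y^{(2)} z$, so the new arguments always contribute a total of two derivatives. Thus a composite of $n$ such endomorphisms, applied to an arbitrary starting element and transported into $\CDi$, becomes a linear combination of differential monomials in which the derivative count increases by $2$ for each of the $n$ operations performed.

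First I would set up the bookkeeping precisely: let $w$ be any element of the relevant ideal arising from the nontrivial identity, and let $E_1,\ldots,E_n$ be any sequence of generating endomorphisms (each either an associator multiplication or a commutator multiplication). I would show by induction on $n$ that the image in $\CDi$ of $E_n\cdots E_1(w)$ is a linear combination of differential monomials, each of which has total derivative degree equal to (derivative degree of the image of $w$) $+\,2n$, with the two new arguments in each step attached so that the Novikov balance condition $i_1+\cdots+i_m=m-1$ of Proposition~\ref{prop:DLbasis} is respected at every stage. The inductive step is just the two displayed computations from the proofs of Theorem~\ref{th:mon-nov} and Corollary~\ref{cor:com-nil}, applied to each monomial in the previous image.

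Next I would invoke Theorem~\ref{th:mon-bimod}: starting from the nonzero image of $f$, the infinitesimal $\Nov$-bimodule it generates contains a monomial $a_1^{(2)}\cdots a_p^{(2)}a_{p+1}\cdots a_{p+q}$, and the argument of Theorem~\ref{th:mon-nov} shows the Novikov ideal of $f$ contains an iterated right associator. The point is then that for $n$ large enough, any nonzero composite $E_n\cdots E_1$ cannot be identically zero as an element of the operad, because its image in $\CDi$ has a nonvanishing leading differential monomial (the coefficients accumulated at each step are, as in the two earlier proofs, nonzero integers in characteristic zero). Conversely, once $n$ exceeds the degree bound forced on admissible Novikov monomials by Theorem~\ref{th:mon-bimod} together with the balance condition, every such composite must vanish. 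This yields nilpotence of the subalgebra of $\mathrm{End}(N)$ generated by the two families of endomorphisms.

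The main obstacle I expect is verifying that nonzero composites really do produce a nonzero leading term, i.e.\ that no cancellation collapses the whole image to zero prematurely. The two earlier proofs handle a single family of operations, where the accumulated coefficients $\lambda_n$ and the binomial factors are visibly nonzero in characteristic zero; here I must check that \emph{mixing} associator-type and commutator-type steps does not introduce a fatal cancellation. I would address this by choosing, at each step, the lexicographically leading differential monomial (as in the ordering used in the proof of Theorem~\ref{th:CDSpecht}) and checking that each generating endomorphism acts on the leading term by a coefficient that is a nonzero integer, so leading terms never cancel; since we work over a field of characteristic zero, these integer coefficients remain invertible, and the induction goes through. The remainder is the routine degree count showing that the required balance condition $i_1+\cdots+i_m=m-1$ can hold for only finitely many values of $n$, forcing nilpotence.
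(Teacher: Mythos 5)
Your first paragraph is sound and matches the paper's intent: both generating endomorphisms, transported into $\CDi$, append a factor carrying exactly two derivatives ($x\mapsto -xyz^{(2)}$ for the associator, $x\mapsto xyz^{(2)}-xy^{(2)}z$ for the commutator), so any length-$n$ composite applied to a generic argument has image equal to a signed sum of permutations of the single monomial $a_1^{(2)}\cdots a_n^{(2)}a_{n+1}\cdots a_{2n+1}$. But the mechanism you then give for vanishing is wrong. You claim that ``the balance condition $i_1+\cdots+i_m=m-1$ can hold for only finitely many values of $n$, forcing nilpotence.'' This is false: every composite $E_n\cdots E_1$, viewed as an operad element, lies in $\Nov(2n+1)$, so by Proposition~\ref{prop:DLbasis} its image satisfies the balance condition \emph{for every} $n$ (total differential degree $2n$ in arity $2n+1$). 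No degree bound is ever exceeded, and nothing vanishes for degree reasons. Moreover, your two middle sentences contradict each other: you first argue composites are nonzero as operad elements for large $n$, then that they ``must vanish'' for large $n$. The cancellation analysis via leading terms is likewise aimed at the wrong target --- whether a composite is nonzero \emph{in the operad} is irrelevant; what must be shown is that it vanishes \emph{as an operation on $N$}, i.e., that it lies in the ideal of identities of $N$.

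The missing step --- and this is exactly how the paper's ``proved in the exact same way'' works, following Theorem~\ref{th:mon-nov} and Corollary~\ref{cor:com-nil} --- is membership in the operadic ideal. Let $f$ be a nontrivial identity of $N$ and let $p$ be the integer produced by Theorem~\ref{th:mon-bimod}; as in the proof of Theorem~\ref{th:mon-nov}, the balance condition forces $q=p+1$, so the ideal of $\Nov$ generated by $f$ contains (the preimage of) the monomial $a_1^{(2)}\cdots a_p^{(2)}a_{p+1}\cdots a_{2p+1}$. An operadic ideal is stable under the symmetric group actions and under linear combinations, so it contains every signed sum of permutations of this monomial; by your own first-paragraph computation, the image of \emph{any} mixed length-$p$ composite is precisely such a signed sum. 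Injectivity of $\Nov\hookrightarrow\CDi$ then places every length-$p$ composite in the ideal generated by $f$, hence it is an identity of $N$; longer composites factor through length-$p$ ones, and every product of $p$ elements of the subalgebra of $\mathrm{End}(N)$ in question is a linear combination of such composites, so the subalgebra is nilpotent. Note that no coefficient or leading-term analysis is needed at any point: a linear combination of permutations of an ideal element lies in the ideal regardless of what the coefficients are.
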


Our results have the following appealing corollary.

\begin{corollary}\label{cor:lie-sol}
Over a field of zero characteristic, a Novikov algebra $N$ satisfies a nontrivial identity if and only if it is Lie-solvable.
\end{corollary}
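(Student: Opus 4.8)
The plan is to prove the two implications separately, using the corollaries already established for one direction and producing an explicit identity for the other.

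The forward implication---that a nontrivial identity forces Lie-solvability---is almost immediate. By Corollary~\ref{cor:com-right-nil} the commutator ideal $[N,N]$ is right nilpotent as a Novikov algebra, and by the theorem of Tulenbaev, Zhelyabin, and the third author \cite{TUZ21}, over a field of characteristic different from~$2$ the right nilpotence of $[N,N]$ is equivalent to the solvability of $N$ as a Lie algebra. Since we work in characteristic zero, this settles one direction at once.

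For the converse, I would start from a Lie-solvable $N$ and exhibit a nontrivial Novikov identity that it satisfies. Introduce the standard multilinear solvability elements, defined recursively on pairwise disjoint variables by $\sigma_1(x_1,x_2)=[x_1,x_2]$ and $\sigma_{k+1}=[\sigma_k(x_1,\ldots,x_{2^k}),\sigma_k(x_{2^k+1},\ldots,x_{2^{k+1}})]$. An easy induction shows that the $k$-th term of the derived series of $(N,[-,-])$ is the linear span of the evaluations of $\sigma_k$ on elements of $N$, so the hypothesis that this derived series terminates at some length $m$ is exactly the statement that $\sigma_m$ vanishes identically on $N$. It therefore remains only to verify that $\sigma_m$ is a \emph{nonzero} element of the operad $\Nov$, i.e.\ a genuinely nontrivial identity.

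This nonvanishing is the heart of the matter, and it is the step I expect to be the main obstacle: a priori the Lie algebra associated to the \emph{free} Novikov algebra could itself be solvable, in which case every $\sigma_m$ would vanish and the whole argument would collapse. I would resolve this inside $\CDi$, via the embedding $\imath$ of Proposition~\ref{prop:DLbasis}. Under $\imath$ the bracket becomes a Wronskian, $\imath([a,b])=ab'-a'b$, so $\imath(\sigma_m)$ is an iterated Wronskian assembled from blocks of disjoint variables. The free commutative associative differential algebra is a polynomial ring, hence a domain, and in characteristic zero the field of constants of its field of fractions is exactly $\k$; consequently the Wronskian $fg'-f'g$ of two elements vanishes only when $f$ and $g$ are linearly dependent over $\k$. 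Since the two arguments fed into each $\sigma_{k+1}$ involve disjoint variables and are thus $\k$-linearly independent, induction on $k$ yields $\imath(\sigma_m)\neq 0$, and by injectivity of $\imath$ we conclude $\sigma_m\neq 0$ in $\Nov$. Thus $\sigma_m=0$ is a nontrivial Novikov identity holding in $N$, completing the proof.
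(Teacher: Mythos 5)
Your proof is correct, but it takes a genuinely different route from the paper in the key direction, namely showing that Lie-solvability is a \emph{nontrivial} identity. The paper disposes of this in one line: the Novikov algebra $\k[x]$ with $f\circ g=fg'$ has as its associated Lie algebra the \emph{simple} Witt algebra $W_1$ of vector fields on the affine line, so no solvability identity $\sigma_m=0$ can hold in all Novikov algebras. You instead verify nonvanishing of $\sigma_m$ directly in the free algebra: you push it through the embedding $\imath\colon\Nov\hookrightarrow\CDi$ of Proposition~\ref{prop:DLbasis}, where the bracket becomes a Wronskian, and use the standard differential-algebra facts that (in characteristic zero) the constants of the fraction field of the free commutative differential algebra are exactly $\k$ and hence that the Wronskian of two $\k$-linearly independent elements is nonzero; your disjoint-variables induction then goes through, since multilinear nonzero elements in disjoint variable sets are automatically $\k$-independent. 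These two arguments are secretly the same — $W_1$ \emph{is} $\k[x]$ with the Wronskian bracket — but the paper's version buys brevity by citing simplicity of $W_1$, while yours is self-contained modulo standard facts of Ritt--Kolchin differential algebra (which you assert rather than prove; they are classical and citable, so this is not a gap). In the forward direction the difference is minor: you invoke the result of Tulenbaev--Zhelyabin--Umirbaev from \cite{TUZ21} (Lie-solvability of $N$ is equivalent to right nilpotence of $[N,N]$) to pass from Corollary~\ref{cor:com-right-nil} to Lie-solvability in one step, whereas the paper goes through Shestakov--Zhang (right nilpotence of $[N,N]$ implies solvability of $[N,N]$ as a Novikov algebra, hence Lie-solvability); both are immediate given Corollary~\ref{cor:com-right-nil}, and your citation is accurate as stated in the paper's introduction.
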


\begin{proof}
For the Novikov algebra $\k[x]$ equipped with the product $f\circ g=fg'$, the corresponding Lie algebra is the \emph{simple} Lie algebra $W_1$ of vector fields on the affine line, so the property of being Lie-solvable of certain degree is a nontrivial Novikov identity. To prove the reverse implication, we note that according to Corollary \ref{cor:com-right-nil}, the commutator ideal $[N,N]$ is right nilpotent, therefore, according to a result of Shestakov and Zhang, the Novikov algebra $[N,N]$ is solvable, and therefore $[N,N]$ is Lie-solvable.
\end{proof}

Corollary \ref{cor:lie-sol} leads to an alternative proof of the following result established by Burde and Dekimpe in \cite{MR2240426}. 

\begin{corollary}
Over a field of zero characteristic, a finite-dimensional Lie algebra admitting a Novikov structure is solvable.
\end{corollary}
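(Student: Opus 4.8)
The plan is to deduce this directly from Corollary~\ref{cor:lie-sol} by verifying its hypothesis. If $L$ is a finite-dimensional Lie algebra admitting a Novikov structure, i.e. a Novikov product $\circ$ on the underlying space of $L$ with $[a,b]=a\circ b-b\circ a$, then the resulting Novikov algebra $N$ is finite-dimensional and has $L$ as its commutator algebra. So it suffices to prove the single claim that \emph{every finite-dimensional Novikov algebra satisfies a nontrivial identity}: granting that, Corollary~\ref{cor:lie-sol} makes $N$ Lie-solvable, which is precisely the solvability of $L$.

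Thus the whole content is the claim that a finite-dimensional Novikov algebra $N$, say of dimension $d$, satisfies a nontrivial identity; equivalently, that the evaluation map $\Nov(n)\to\mathrm{Hom}(N^{\otimes n},N)$ has a nonzero kernel for some $n$. The first thing I would try is a dimension count. By Proposition~\ref{prop:DLbasis}, $\dim\Nov(n)$ equals the number of differential monomials $a_1^{(i_1)}\cdots a_n^{(i_n)}$ with $i_1+\cdots+i_n=n-1$, namely $\binom{2n-2}{n-1}$, which grows like $4^n$. On the other hand, the image of the evaluation map has dimension at most $\dim\mathrm{Hom}(N^{\otimes n},N)=d^{n+1}$. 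When $d\le 3$ this already forces a kernel for large $n$, and we are done.

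The delicate range is $d\ge 4$, where the crude bound $d^{n+1}$ no longer beats $4^n$. To close this gap I would not estimate $\mathrm{Hom}(N^{\otimes n},N)$ blindly but exploit the second defining identity of Novikov algebras, which says exactly that the right multiplications $R_a\colon x\mapsto x\circ a$ commute. Writing every multilinear Novikov monomial in its left-combed form $R_{v_1}\cdots R_{v_k}(x_j)$ (follow the left spine of the parse tree down to a leaf $x_j$, with the right subtrees $v_1,\dots,v_k$ hanging off), one sees that each $R_{v_i}$ lies in the finite-dimensional commutative subalgebra of $\mathrm{End}(N)$ generated by all right multiplications, so the evaluation factors through products in a fixed commutative algebra. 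The aim is to turn this into a bound on the codimension sequence that is genuinely below $\binom{2n-2}{n-1}$. This is very much in the spirit of the $\CDi$-computations of Theorem~\ref{t2} and Theorem~\ref{th:mon-bimod}, where commuting derivations (respectively right multiplications) were exactly what made the monomials collapse.

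I expect this codimension estimate to be the main obstacle. Bounding the growth of the evaluation image below $4^n$ for Novikov algebras of dimension $\ge 4$ amounts to saying that no finite-dimensional Novikov algebra generates the whole variety $\Nov$, and the associative analogy—where the $4$-dimensional algebra $M_2$ does attain exponential codimension growth $4^n$—shows that right-commutativity must be used in an essential way rather than bare dimension counting. Once the estimate is in place, the nontrivial identity it produces feeds into Corollary~\ref{cor:lie-sol}, and the solvability of $L$ follows at once.
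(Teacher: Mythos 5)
Your reduction is exactly the paper's: it suffices to prove that every finite-dimensional Novikov algebra satisfies a nontrivial identity, after which Corollary~\ref{cor:lie-sol} gives Lie-solvability, i.e.\ solvability of the underlying Lie algebra. Your dimension count is also correct as far as it goes: by Proposition~\ref{prop:DLbasis}, $\dim\Nov(n)=\binom{2n-2}{n-1}$, which grows like $4^n$ up to polynomial factors, so for $d=\dim N\le 3$ the evaluation map $\Nov(n)\to\mathrm{Hom}(N^{\otimes n},N)$ has a nonzero kernel for large $n$. But for $d\ge 4$ what you offer is not a proof; it is an announced plan, and you say so yourself. Moreover, the estimate you would need --- that the image of the evaluation map for a $d$-dimensional Novikov algebra grows strictly slower than $\binom{2n-2}{n-1}$ --- is essentially a restatement of the claim being proved (that $N$ does not generate the whole variety), and nothing in the sketch about left-combed forms and commuting right multiplications is converted into an actual counting argument; it is genuinely unclear that it could be, since the commutativity of right multiplications is already accounted for in the dimension $\binom{2n-2}{n-1}$ of $\Nov(n)$ itself.

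The paper closes this gap by a short representation-theoretic argument that works uniformly in $d$: by \cite{MR3241181}, free Novikov algebras over a field of characteristic zero contain polynomial $\mathrm{GL}$-representations corresponding to Young diagrams with arbitrarily many rows. Consequently, for every $d$ there is a nonzero multilinear element of some $\Nov(n)$ that is skew-symmetric in $d$ of its arguments. In characteristic zero, a multilinear polynomial skew-symmetric in more than $\dim N$ arguments vanishes identically on $N$: by multilinearity one may evaluate on basis vectors, and then two of the skew-symmetrized arguments must coincide, forcing the value to be zero. Taking $d=\dim N+1$ thus produces a nontrivial identity for \emph{every} finite-dimensional Novikov algebra, with no codimension estimates at all. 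To repair your write-up, replace the $d\ge 4$ discussion by this skew-symmetry argument (which also renders your $d\le 3$ count superfluous); as it stands, your proposal proves the corollary only for Novikov algebras of dimension at most three.
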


\begin{proof}
It is known \cite{MR3241181} that free Novikov algebras over a field of zero characteristic contain polynomial representations corresponding to Young diagrams with arbitrary many rows. It follows that for each $d$ there exist a nontrivial multilinear identity that is skew-symmetric in $d$ of its arguments, so every finite-dimensional Novikov algebra satisfies a nontrivial identity, and Corollary \ref{cor:lie-sol} applies.
\end{proof}

Finally, we record an alternative proof of the following statement that was first established by Makar-Limanov and the third author in \cite{MLU11N}.

\begin{corollary}\label{cor-W1}
If $\mathrm{char}(\k)=0$, the Novikov algebra $\k[x]$ equipped with the product $f\circ g=fg'$ does not satisfy any nontrivial Novikov identity. 
\end{corollary}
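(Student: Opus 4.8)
The plan is to deduce this from Corollary~\ref{cor:lie-sol}, which asserts that over a field of zero characteristic a Novikov algebra satisfies a nontrivial identity if and only if it is Lie-solvable. Thus it suffices to show that the Lie algebra $L$ obtained from $\k[x]$ with the product $f\circ g=fg'$ by means of the bracket $[f,g]=f\circ g-g\circ f=fg'-gf'$ fails to be solvable. This is exactly the observation already used in the proof of Corollary~\ref{cor:lie-sol}, so the present statement is essentially the converse reading of the same identification, and the work lies entirely in verifying non-solvability.

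First I would fix the basis $e_n=x^{n+1}$ of $\k[x]$, indexed by integers $n\ge -1$, and record the bracket
\[
[e_m,e_n]=e_me_n'-e_ne_m'=(n-m)e_{m+n},
\]
which identifies $L$ with the Lie algebra $W_1$ of polynomial vector fields on the affine line (via $e_n\mapsto x^{n+1}\,d/dx$). The key point is that this Lie algebra is perfect: since $[e_0,e_n]=n\,e_n$, every basis vector $e_n$ with $n\ne 0$ lies in $[L,L]$, while $e_0=\tfrac12[e_{-1},e_1]$ also lies in $[L,L]$. Hence $[L,L]=L$, so the derived series is stationary, $L^{(k)}=L\ne 0$ for all $k$, and $L$ is not solvable.

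Combining the two steps gives the result: were $\k[x]$ to satisfy a nontrivial Novikov identity, Corollary~\ref{cor:lie-sol} would force $L$ to be solvable, contradicting perfectness. I do not expect any serious obstacle here, as the argument is a one-line computation once Corollary~\ref{cor:lie-sol} is available; the only point worth care is to establish non-solvability by exhibiting perfectness directly, rather than merely invoking the simplicity of $W_1$, so that the argument stays elementary and manifestly valid over an arbitrary field of characteristic zero.
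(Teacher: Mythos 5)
Your proof is correct, but it takes a genuinely different route from the paper's. The paper deduces Corollary~\ref{cor-W1} directly from Theorem~\ref{th:mon-nov}: if $\k[x]$ satisfied a nontrivial identity, it would be right associator nilpotent, i.e., under the embedding $\Nov\hookrightarrow\CDi$ it would satisfy an identity of the form $a_1^{(2)}\cdots a_p^{(2)}a_{p+1}\cdots a_{2p+1}=0$, which visibly fails in the polynomial algebra (substitute, say, $x^2$ for every argument). You instead route through Corollary~\ref{cor:lie-sol}: a nontrivial identity would force Lie-solvability, which fails because the associated Lie algebra is $W_1$, shown perfect by your computation $[e_m,e_n]=(n-m)e_{m+n}$, $e_0=\tfrac12[e_{-1},e_1]$, $e_n=\tfrac1n[e_0,e_n]$ for $n\ne 0$. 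Importantly, your argument is not circular: you only invoke the implication ``nontrivial identity $\Rightarrow$ Lie-solvable'', whose proof in the paper (via Corollary~\ref{cor:com-right-nil} and the Shestakov--Zhang theorem) nowhere involves $\k[x]$; the non-solvability of $W_1$ enters the paper's proof of Corollary~\ref{cor:lie-sol} only in the \emph{other} implication, to certify that Lie-solvability is a nontrivial identity. The trade-off: the paper's proof is shorter and rests on less machinery, exhibiting concrete identities that fail; yours depends on the longer chain of corollaries plus the external Shestakov--Zhang result, but it has the merit of replacing the paper's unproved assertion that $W_1$ is simple by an elementary perfectness computation, which is all that is actually needed.
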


\begin{proof}
Clearly, no differential identity of the form $a_1^{(2)}\cdots a_p^{(2)} a_{p+1}\cdots a_{2p+1}=0$ holds in the polynomial algebra.
\end{proof}

\subsection{Specht property for Novikov identities}

In this section, we shall prove that every system of multilinear identities of Novikov algebras admits a finite basis. In operadic terms, we prove the following result.

\begin{theorem}\label{th:NovSpecht}
Every ideal of the operad $\Nov$ is finitely generated.
\end{theorem}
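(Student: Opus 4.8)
The plan is to transplant the proof of Theorem~\ref{th:CDSpecht} into the Novikov operad, using Proposition~\ref{prop:DLbasis} to realize $\Nov$ concretely. By that proposition, a basis of $\Nov(n)$ is given by the \emph{balanced} differential monomials $a_1^{(i_1)}\cdots a_n^{(i_n)}$ with $i_1+\cdots+i_n=n-1$, so I would reuse the invariant $\alpha(e)=(i_1,\dots,i_n)$ and the lexicographic order $\preceq$ from the proof of Theorem~\ref{th:CDSpecht}, now restricted to balanced monomials. For $f\in\Nov$ let $\hat f$ denote its leading monomial. Assuming an ideal $I\subseteq\Nov$ is not finitely generated, I would build a sequence $\{f_k\}$ by choosing $f_k\in I$ with the $\preceq$-smallest leading term not lying in the $\Nov$-ideal generated by $f_1,\dots,f_{k-1}$, normalizing each leading coefficient to $1$.

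Exactly as before, Higman's Lemma applied to the sequence $\{\alpha(f_s)\}$ in $(\mathbb{N}^*,\trianglelefteq)$, with the same exotic base order in which $0$ is incomparable to every other element, yields indices $s<t$ with $\alpha(f_s)\trianglelefteq\alpha(f_t)$; that is, $\hat f_s$ divides $\hat f_t$ in the sense that the arguments of $\hat f_s$ embed into a subsequence of those of $\hat f_t$, with zeros matched to zeros and each positive degree of $\hat f_s$ matched to a greater-or-equal degree of $\hat f_t$. I then need to manufacture, using only Novikov operations, an element $h$ of the $\Nov$-ideal generated by $f_s$ with $\hat h=\hat f_t$.

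The crux is that the Novikov toolkit is more rigid than that of $\CDi$: the two free moves used in Theorem~\ref{th:CDSpecht}, namely multiplying by a degree-$0$ variable and differentiating a single argument, are no longer available individually, since every operation must preserve the balance $\sum i_j=n-1$. Two balance-preserving moves remain: right multiplication by a fresh variable, $f\mapsto f\circ a=fa'$, whose leading term appends a degree-$1$ argument to $\hat f$; and the substitution $a_k\mapsto b\circ a_k=ba_k'$, whose top-derivative term raises $i_k$ by $1$ while introducing one fresh degree-$0$ variable $b$. After verifying that $\preceq$ is compatible with both moves, so that the leading term of the result is obtained by applying the move to $\hat f$, I would carry out the balance accounting that makes everything fit. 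Raising the matched positive degrees of $\hat f_s$ to those of $\hat f_t$ costs as many substitutions as the total degree excess $E$, each producing one degree-$0$ byproduct, while building each unmatched argument of degree $d\ge 1$ costs one right multiplication and $d-1$ substitutions, producing $d-1$ further byproducts. A short count, using that the unmatched arguments of $\hat f_t$ carry total degree $(n_t-n_s)-E$, shows that the number of forced degree-$0$ byproducts equals precisely the number $U_0$ of unmatched degree-$0$ arguments of $\hat f_t$, so the construction reproduces $\hat f_t$ exactly.

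With such an $h$ in hand, the difference $f_t-h$ lies in $I$ and has leading term $\prec\hat f_t$, hence belongs to the $\Nov$-ideal generated by $f_1,\dots,f_{t-1}$; since $h$ lies in the $\Nov$-ideal generated by $f_s$, so does $f_t=(f_t-h)+h$, contradicting the choice of $f_t$. Therefore $I$ is finitely generated. I expect the main obstacle to be exactly the balance constraint: one must confirm that the two rigid Novikov moves suffice to realize an arbitrary Higman division, and the degree-$0$ bookkeeping of the previous paragraph is the heart of that verification. A secondary technical point is establishing the compatibility of $\preceq$ with the substitution move, the analogue of the bulleted leading-term computations in the proof of Theorem~\ref{th:CDSpecht}.
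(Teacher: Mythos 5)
Your overall strategy is the paper's -- Higman's lemma applied to degree sequences, leading-term cancellation, and a realization of the Higman division by Novikov moves -- and your balance bookkeeping (the byproduct degree-$0$ variables exactly account for the unmatched degree-$0$ arguments of $\hat{f_t}$) is correct. But there is a genuine gap at precisely the point you set aside as ``a secondary technical point'': the plain lexicographic order on $\alpha(e)$ inherited from Theorem~\ref{th:CDSpecht} is \emph{not} compatible with the substitution move $a_k\mapsto b\circ a_k=ba_k'$. In the expansion
\[
(a_k'b)^{(s)}=\sum_{i=0}^{s}\binom{s}{i}a_k^{(i+1)}b^{(s-i)},
\]
which term is lex-largest depends on where the fresh variable $b$ sits in the argument order: if $b$ is placed \emph{before} $a_k$, the lex-leading term is $a_k'b^{(s)}$ (all derivatives falling on $b$), not the term $a_k^{(s+1)}b$ your construction needs. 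So under plain lex your substitution move behaves as claimed only when every byproduct zero is inserted \emph{after} the argument being raised.

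That restriction is fatal, because Higman divisions routinely force byproducts to land before raised arguments. Concretely, take $\hat{f_s}=a_1a_2'$ (degree pattern $(0,1)$) and $\hat{f_t}=a_1a_2a_3^{(2)}$ (pattern $(0,0,2)$): the division matches $a_2'$ with $a_3^{(2)}$, so you must raise the positive degree once and place the byproduct zero in the second position, before the raised argument. Under plain lex, the element $f_s(a_1,a_2'b)$ has leading term $a_1a_2'b'$ (pattern $(0,1,1)$, which lex-dominates $(0,0,2)$), so $f_t-h$ does not cancel leading terms; worse, no sequence of your two moves can produce leading pattern $(0,0,2)$ at all, since under plain lex every raising forces a degree-$0$ argument strictly later than the raised one, while the target has its positive argument last. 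The paper's proof repairs exactly this by refining the order: it compares first the number $n(e)$ of degree-zero arguments, then $\alpha(e)$ lexicographically. Under $(n(e),\alpha(e))$, the term $a_k^{(s+1)}b$ is the unique term of the expansion containing a degree-$0$ factor, hence dominates \emph{regardless} of where $b$ is placed, and the realization of the division goes through (using only the substitution move together with the balance argument; your extra right-multiplication move is then unnecessary). Without this refinement of the order, your argument cannot be completed.
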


\begin{proof}
Let us define an ordering of basis elements of the same arity of the operad $\Nov$ as follows. In addition to the sequence $\alpha(e)$ of differential degrees appearing in $e$, we assign to each basis element $e$ of arity $n$ the parameter $n(e)$ equal to the number of arguments of $e$ whose differential degree is equal to zero. For two basis elements $e$ and $e'$ of arity $n$, we say that $e\preceq e'$ if $(n(e),\alpha(e))<(n(e'),\alpha(e'))$ lexicographically. 

An almost obvious property of the ordering that we just introduced is the following compatibility with the Novikov substitutions: if $e=a_1^{(i_1)}\cdots a_{n}^{(i_{n})}$ is the leading term of an element $f\in \Nov$, then for each total order on the arguments $a_1,\ldots,a_n,b$ that extends the standard order on $a_1,\ldots,a_n$ (an order of arguments is needed to form the \emph{sequence} of differential degrees), the leading term of the element $f(a_1,\ldots,a_{k-1}, a_k'b,a_{k+1},\ldots,a_n)$ is 
 \[
a_1^{(i_1)}\cdots a_{k-1}^{(i_{k-1})} a_k^{(i_k+1)}a_{k+1}^{(i_{k+1})}\cdots a_{n}^{(i_{n})} b,
 \]
where in the latter formula it is implied that the factor $b$ is placed in its correct place according to the given order of arguments. To prove this assertion, it is enough to note that for $s>0$, then all terms except for the last term $a^{(s+1)}b$ in the formula
 \[
(a'b)^{(s)}=\sum_{i=0}^{s} \binom{s}{i}a^{(i+1)}b^{(s-i)}
 \]
are products of two elements of positive differential degrees, so $a^{(s+1)}b$ will the leading term of this Novikov substitution according to our order of the basis. 

Suppose that $I$ is an ideal of $\Nov$ that is not finitely generated. Then we can define an infinite sequence of elements $\{f_k\}_{k\in\mathbb{N}}$ by choosing as $f_k$ the element of~$I$ with the smallest possible leading term $\hat{f_k}$ (with respect to the order $\preceq$) that does not belong to the ideal of $\Nov$ generated by $f_1,\ldots, f_{k-1}$. We shall assume that for each $k$ the leading coefficient of $f_k$ is equal to $1$.

We shall now use an argument similar to that of the proof of Theorem \ref{th:CDSpecht}. In that proof, we established that one can find $s<t$ such that there exists a subsequence of the sequence of arguments of the monomial~$\hat{f_t}$ such that the arguments of differential degree $0$, as well as of strictly positive differential degrees in that subsequence are in one-to-one correspondence with the arguments of $\hat{f_s}$, and, moreover, for each argument of strictly positive differential degree, the differential degree in $\hat{f_t}$ is greater than or equal to the corresponding differential degree in $\hat{f_s}$. As before, we shall refer to this by simply saying that $\hat{f_s}$ divides $\hat{f_t}$.

Let us show that $f_t$ belongs to the ideal generated by $f_s$. Using the above observation about the leading terms, we note that we can apply several Novikov substitutions to replace $f_s$ by an element $g$ from the ideal generated by $f_s$ such that $\hat{g}$ divides $\hat{f_t}$ and that the sequences of strictly positive differential degrees in $\hat{g}$ and $\hat{f_t}$ are the same. Since the arity and the differential degree of a Novikov element differ by one, this forces $\hat{g}=\hat{f_t}$.  The difference $f_t-g$ has the leading term smaller than that of $f_t$, and hence belongs to the ideal generated by $f_1,\ldots, f_{t-1}$. Since $g$ belongs to the ideal generated by $f_s$, we conclude that $f_t=(f_t-g)+g$ belongs to the ideal generated by $f_1,\ldots, f_{t-1}$, which is a contradiction. It follows that the ideal $I$ is finitely generated.
\end{proof}

\begin{corollary}
Over a field of zero characteristic, every system of Novikov identities is equivalent to a finite one.
\end{corollary}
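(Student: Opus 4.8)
The plan is to reduce the statement to Theorem~\ref{th:NovSpecht} by invoking the standard correspondence between systems of identities and ideals of the governing operad. The crucial input is the classical fact that over a field of zero characteristic every polynomial identity is equivalent to a collection of multilinear ones, obtained by the usual polarization procedure: replacing an argument that occurs with multiplicity by a sum of fresh variables and extracting the multihomogeneous components. Consequently, any system $S$ of Novikov identities is equivalent to a system $S'$ consisting entirely of multilinear identities, and it suffices to treat the multilinear case.

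Next I would translate the notion of \emph{consequence} into operadic language. A multilinear identity of arity $n$ is by definition an element of $\Nov(n)$, and the set of all multilinear consequences of $S'$ forms precisely an ideal $I$ of the operad $\Nov$. Indeed, as recalled in Section~\ref{sec:recoll}, closure under the two operadic composition structures encodes exactly the two ways of producing new identities from old ones: the right $\Nov$-module structure records substitution of arbitrary Novikov polynomials into the arguments of an identity (closure under endomorphisms of the free algebra), while the infinitesimal left structure records substitution of the identity into a single slot of an arbitrary Novikov operation. Thus $S$ and its multilinear closure determine, and are determined by, the ideal $I\subseteq\Nov$.

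Finally I would apply Theorem~\ref{th:NovSpecht}, which asserts that every ideal of $\Nov$ is finitely generated. Choosing finitely many generators $f_1,\ldots,f_r$ of $I$ and reading them back as multilinear identities yields a finite subsystem of $S'$ from which all of $S'$, and hence all of $S$, follows. I do not anticipate any genuine obstacle here: the only substantive ingredient is Theorem~\ref{th:NovSpecht} itself, while the passage from identities to operadic ideals and the characteristic-zero multilinearization are entirely routine and have already been used verbatim in the analogous corollary to Theorem~\ref{th:CDSpecht}.
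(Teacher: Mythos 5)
Your proposal is correct and follows essentially the same route as the paper, whose proof is simply the observation that in characteristic zero every identity is equivalent to multilinear ones, after which Theorem~\ref{th:NovSpecht} applies; your write-up merely makes explicit the identification of multilinear consequence-closure with operadic ideals that the paper sets up in Section~\ref{sec:recoll}. One cosmetic slip: the finitely many generators $f_1,\ldots,f_r$ of the ideal $I$ are consequences of $S'$ rather than necessarily members of it, so you obtain an equivalent finite system, not literally a finite \emph{subsystem} --- which is all the statement requires.
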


\begin{proof}
In that case, every identity is equivalent to a multilinear one, so Theorem~\ref{th:NovSpecht} applies. 
\end{proof}

Let us finish with noting that, by contrast with Theorems \ref{th:CDSpecht} and \ref{th:NovSpecht}, not every infinitesimal sub-$\Nov$-bimodule of $\CDi$ is finitely generated. As an example, we may consider the elements $a_1^{(k)}a_2^{(k)}$ for $k\geq 1$: since Novikov substitutions and multiplications by Novikov elements increase the arity, none of these elements may be contained in the infinitesimal $\Nov$-bimodule generated by the others. In fact, would not even help to allow the left infinitesimal action of the bigger operad $\CDi$, since the differential ideal in the free algebra in two generators generated by the elements $a_1^{(k)}a_2^{(k)}$ for $k\ge 0$ is not finitely generated. This latter statement is mentioned in \cite{MR3973134}, and is proved analogously to the result of Ritt \cite[p.~11]{Ritt} stating that the differential ideal generated by the elements $a_1^{(k)}a_2^{(l)}$ for $k,l\ge 0$ is not finitely generated.

\section*{Acknowledgments}

We thank Gleb Pogudin for enlightening comments about identities in differential algebras. The crucial part of work on this paper was prepared during the authors' stay at the Max Plank Institute for Mathematics in Bonn and they wish to express their gratitude to that institution for hospitality and excellent working conditions. This research is supported by the Institut Universitaire de France, by Fellowship USIAS-2021-061 of the University of Strasbourg Institute for Advanced Study through the French national program ``Investment for the future'' (IdEx-Unistra), by the French national research agency (project ANR-20-CE40-0016), and by the grants of the Ministry of Education and Science of the Republic of Kazakhstan (projects AP14872073 and AP14870282).

\bibliographystyle{plain}
\bibliography{biblio}

\end{document}